\newtheorem{theorem}{Theorem}
\newtheorem{lemma}{Lemma}
\newtheorem{proposition}[lemma]{Proposition}
\numberwithin{lemma}{section}
\newcommand{\Ez}{E_0}
\newcommand{\E}{E}
\newcommand{\Elint}{E^{(3)}_{lin}}
\newcommand{\Ent}{E^{n,(3)}}
\numberwithin{equation}{section}
\newcommand{\R}{{\mathbb R}}
\newcommand{\tW}{{\tilde W}}
\newcommand{\tQ}{{\tilde Q}}
\renewcommand{\H}{{\mathcal H }}
\newcommand{\tG}{\tilde{G}}
\newcommand{\tK}{\tilde{K}}
\newcommand{\tg}{{\tilde g}}
\newcommand{\tk}{{\tilde k}}
\newcommand{\W}{{\mathbf W}}
\newcommand{\tU}{\tilde{U}}
\newcommand{\tY}{\tilde{Y}}
\begin{document}

\title{The NLS approximation for two dimensional deep gravity waves}

\author{Mihaela Ifrim}
 \address{Department of Mathematics, University of Wisconsin at Madison}.
\email{ifrim@math.berkeley.edu}
%\thanks{The first author was supported by the Luce Foundation}
\author{ Daniel Tataru}
\address{Department of Mathematics, University of California at Berkeley}
 %\thanks{The second author was partially supported by the NSF grant DMS-1266182
%as well as by the Simons Foundation}
\email{tataru@math.berkeley.edu}

\begin{abstract}
  This article is concerned with infinite depth gravity water waves in
  two space dimensions.  We consider this system expressed in
  position-velocity potential holomorphic coordinates.  Our goal is to
  study this problem with small wave packet data, and to show that
  this is well approximated by the cubic nonlinear Schr\"odinger equation (NLS) on the natural
  cubic time scale.
\end{abstract}

\maketitle

\section{Introduction}
We consider the two dimensional water wave equations with infinite
depth and with gravity, but without surface tension.  This is governed by
the incompressible Euler's equations in a moving domain $\Omega_t$
with boundary conditions on the water surface $\Gamma_t$, which is viewed 
as a free boundary.

Under the additional assumption that the flow is irrotational, the
fluid dynamics can be expressed in terms of a one-dimensional
evolution of the water surface coupled with the trace of the velocity
potential on the surface. 

The linearization of the water wave equations around the zero solution 
is a dispersive flow, with the dispersion relation $\tau = \pm \sqrt{|\xi|}$.
The NLS approximation corresponds to solutions which are localized
on one of these two branches near a fixed frequency $\xi_0$ on the frequency 
scale $\delta \xi = \epsilon \ll 1$.  Considering the quadratic  approximation 
for the dispersion relation, one sees that the solutions to the linearized equation
are well approximated by the appropriate linear Schr\"odinger equation
up to a quartic $(\epsilon^{-3})$ time scale. 

To the above linear approximation one adds nonlinear effects. Quadratic interactions 
are non-resonant, but the cubic ones are not. Because of this, for solutions 
of amplitude $\epsilon$, one begins to see nonlinear (cubic) effects at cubic 
time scale $\epsilon^{-2}$.  To capture these effects, one naturally replaces 
the  linear Schr\"odinger equation with a cubic NLS problem. A formal derivation of this approximation
was  first obtained  by Zakharov~\cite{zak} for infinite depth and Hasimoto and 
Ono \cite{ho} for finite depth. For simpler dispersive models such an approximation 
has been rigorously justified in \cite{MR2372806,MR2898885,MR3310504,2016arXiv160508704D}.

Our main result asserts that, indeed, on the cubic time scale $|t| < T \epsilon^{-2}$ the 
solutions to the water wave equation with wave packet data are well approximated
by the appropriate cubic NLS flow. Here $T$ can be chosen arbitrarily large,
and represents the effective NLS time.  Our result simplifies and improves 
 earlier work in \cite{MR2800153,MR2891875}. See also \cite{MR3461357} for finite depth, as well as the survey article  \cite{MR3409892}.

\subsection{ The incompressible Euler equations}

We consider the incompressible, infinite depth water wave equation in
two space dimensions with gravity but no surface tension. This is
governed by the incompressible Euler's equations with boundary
conditions on the water surface.

We briefly provide the Eulerian formulation of the equations. We
denote the water domain at time $t$ by $\Omega(t)$, and the water
surface at time $t$ by $\Gamma(t)$. We think of $\Gamma(t)$ as being
either asymptotically flat at infinity. The fluid velocity is denoted
by $u$ and the pressure is $p$. Then $u$ solves the Euler's equations
inside $\Omega(t)$,
\begin{equation}
\left\{
\begin{aligned}
& u_t + u \cdot \nabla u = -\nabla p  -  g j
\\
& \text{div } u = 0
\\
& u(0,x) = u_0(x),
\end{aligned}
\right.
\end{equation}
while on the boundary we have the dynamic boundary condition
\begin{equation}
 p = 0  \ \ \text{ on } \Gamma (t ),
\end{equation}
and the kinematic boundary condition 
\begin{equation}
\partial_t+ u \cdot \nabla \text{ is tangent to } \bigcup \Gamma (t),
\end{equation}
where $g$ represents the gravity.

Under the additional assumption that the flow is irrotational, we can
write $u$ in terms of a velocity potential $\phi$ as $u = \nabla \phi$, 
where $\phi$ is harmonic within the fluid domain, with appropriate decay at infinity.
Thus $\phi$ is determined by its trace on the free boundary
$\Gamma (t)$. Denote by $\eta$ the height of the water surface as a
function of the horizontal coordinate. Following Zakharov, we introduce 
$\psi =\psi (t,x)\in \mathbb{R}$ to be the trace of the velocity potential $\phi$ on the boundary,
$\psi (t,x)=\phi (t,x,\eta(t,x))$. Then the fluid dynamics can be expressed in terms of a 
one-dimensional evolution of  the pairs of variables
$(\eta,\psi)$, namely 
\begin{equation}
\left\{
\begin{aligned}
& \partial_t \eta - G(\eta) \psi = 0 \\
& \partial_t \psi + g \eta  +\frac12 |\nabla \psi|^2 - \frac12 
\frac{(\nabla \eta \cdot \nabla \psi +G(\eta) \psi)^2}{1 +|\nabla \eta|^2} = 0 .
\end{aligned}
\right.
\end{equation}
Here $G$ represents the Dirichlet to Neumann map associated to the fluid domain.
This is Zakharov's  Eulerian formulation of the gravity water wave equations.

It is known since Zakharov~\cite{zak} that the water-wave 
system is Hamiltonian, where the Hamiltonian (conserved energy) is given by
\[
\mathcal{H}(\eta ,\psi)= \frac{g}{2} \int_{\mathbb{R}}\eta^2\, dx +\frac{1}{2}\int_{\mathbb{R}}\int_{-\infty}^{\eta(x)} \vert \nabla_{x,y}\phi\vert^2 \, dy \, dx.
\]

The above formulation is only given here for a complete picture of the way
the two-dimensional water wave equations were traditionally
described. Besides the Eulerian setting, there is also the Lagrangian
formulation which we will not describe because it is not in any
way related with the goals of our paper. 

Both the Eulerian and Lagrangian formulations work in any spatial dimension. However, in
this paper we are only interested in the two dimensional case,
where we have available an additional choice of coordinates, 
namely the  \emph{holomorphic  coordinates} (or conformal coordinates).
We found these coordinates very useful and easy to use for a number 
of problems  in the two dimensional water waves realm (see for
example \cite{HIT, ITCapillary, IT-global, ITVorticity}), and we will
also prefer them here.

\subsection{ Water waves in holomorphic coordinates}

Here we will work with the water wave equations in holomorphic
coordinates, defined via a conformal (holomorphic) parametrization of
the fluid domain by the lower half-space $\H = \{ \Im z < 0\}$ with
coordinates denoted by $z = \alpha+i\beta$.  The conformal map
\[
Z: \H \to \Omega_t
\]
is chosen to map $\R$ (the boundary of $\H$) into the free boundary $\Gamma_t$, and is uniquely 
determined by the requirement that
\[
\lim_{\H \ni z \to \infty} Z(z) - z = 0.
\]
Removing the leading part 
\[
W(z) := Z(z) - z,
\]
we obtain our first dynamic variable $W$ which describes the
parametrization of the free boundary.

Our second dynamic variable, the holomorphic velocity potential,
denoted by $Q$, is represented in terms of the real velocity potential
$\phi$ and its harmonic conjugate $q$ (the stream function) as
\[
Q = \phi+iq.
\]

Expressed in holomorphic position/velocity potential 
coordinates $(W,Q)$ the water wave equations 
have the form 
\begin{equation}
\label{ww2d1}
\left\{
\begin{aligned}
& W_t + F (1+W_\alpha) = 0, \\
& Q_t + F Q_\alpha -i W + P\left[ \frac{|Q_\alpha|^2}{J}\right]  = 0, \\
\end{aligned}
\right.
\end{equation}
where $P$ is the projection onto negative frequencies, alternatively defined as
\[
P:=\frac{1}{2}\left(I-iH\right),
\]
wit$H$ denoting the Hilbert transform,  
and $F$ is given by
\begin{equation}
\label{rww}
 F := P\left[ \frac{Q_\alpha - \bar Q_\alpha}{J}\right] , \qquad J = |Z_\alpha|^2.
\end{equation}

This equations are interpreted as an evolution in the space of
holomorphic functions, where, by a slight abuse of terminology, we
call a function on the real line holomorphic if it admits a bounded
holomorphic extension to the lower half space $\H$.

For a complete derivation of the above equations we refer the reader
to \cite{HIT}.  The use of such coordinates was pioneered by
Ovsiannikov~\cite{ov}, and further developed by Wu~\cite{wu} and
Dyachenko-Kuznetsov-Spector-Zakharov~\cite{zakharov}. Our notations
here follow \cite{HIT} and are closer to the system formulation in
\cite{zakharov}, except that we prefer to work with complex valued
holomorphic functions as in \cite{wu} rather than with their real
parts as in \cite{zakharov}. This is in part due to the algebra structure
available on the space of holomorphic functions.

The system \eqref{ww2d1} is a Hamiltonian system, with the energy
(Hamiltonian) given by
\[
\E  (W,Q)=  \int \frac12 |W|^2 + \frac1{2i} (Q \bar Q_\alpha - \bar Q Q_\alpha)
- \frac{1}{4} (\bar W^2 W_\alpha + W^2 \bar W_\alpha)\, d\alpha.
\]
This corresponds to the energy space 
\[
\H := L^2 \times \dot H^\frac12.
\]
We will also use the higher regularity spaces 
\[
\H^k = \{ (w,q);  \ \partial^j  (w,q)  \in \H, \ j = 0, \cdots,k\}.
\]

Viewed as a nonlinear hyperbolic system, the water wave system is degenerate 
hyperbolic with a double speed. It is also non-diagonal, so to better understand it
it is useful to diagonalize it and introduce the appropriate counterpart of 
\emph{ Alihnac's good variable}. This is most readily seen at the level 
of the differentiated equation (see \cite{HIT}), where the good variable is given by 
\[
(\W,R):= (W_\alpha, \frac{Q_\alpha}{1+W_\alpha}).
\]
Here the new holomorphic variable $R$ represents exactly the Eulerian velocity field
 in complex notation and with the holomorphic parametrization.

 Using these variables one can convert the system \eqref{ww2d1} into a
 quasilinear system for $(\W,R)$ by differentiating it.  This has the
 form
\begin{equation} \label{ww2d-diff}
\left\{
\begin{aligned}
 & \W_{ t} + b \W_{ \alpha} + \frac{(1+\W) R_\alpha}{1+\bar \W}   =  (1+\W)M
\\
& R_t + bR_\alpha = i\left(\frac{\W - a}{1+\W}\right),
\end{aligned}
\right.
\end{equation}
where the advection coefficient (i.e. the double speed, or the
Eulerian velocity field restricted to the surface in the holomorphic
parametrization) is
\begin{equation}
b := \Re F = P \left[\frac{{Q}_\alpha}{J}\right] +  \bar P\left[\frac{\bar{Q}_\alpha}{J}\right].
\label{defb}
\end{equation}
 The real {\em frequency-shift} $a$ is given by
\begin{equation}
a := i\left(\bar P \left[\bar{R} R_\alpha\right]- P\left[R\bar{R}_\alpha\right]\right),
\label{defa}
\end{equation}
and  the auxiliary function $M$ is given by
\begin{equation}\label{M-def}
M :=  \frac{R_\alpha}{1+\bar \W}  + \frac{\bar R_\alpha}{1+ \W} -  b_\alpha =
\bar P [\bar R Y_\alpha- R_\alpha \bar Y]  + P[R \bar Y_\alpha - \bar R_\alpha Y],
\end{equation}
with
\[
Y := \frac{\W}{1+\W}.
\]
The differentiated  system \eqref{ww2d-diff} is self contained,  
governs an evolution in the space of holomorphic functions, 
and can  be used both  directly and in its projected version (which are equivalent).

As shown in \cite{HIT}, the system \eqref{ww2d-diff} can be studied independently of the original system
\eqref{ww2d1}, and also independently of whether the solution $(\W,R)$ corresponds to a non-self-intersecting 
fluid surface. Precisely, we have the following well-posedness result:

\begin{theorem}[\cite{HIT}] \label{t:lwp}
The system \eqref{ww2d-diff} is locally well-posed in $\H^1$.
\end{theorem}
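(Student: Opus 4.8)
The plan is to run the quasilinear well-posedness scheme organized around a coercive energy functional. Concretely, I would: (i) construct a functional $E^1 \approx \|(\W,R)\|_{\H^1}^2$ whose time derivative along \eqref{ww2d-diff} is bounded by $C(\|(\W,R)\|_{\H^1})\|(\W,R)\|_{\H^1}^2$, yielding a priori bounds on a time interval depending only on the size of the data; (ii) produce solutions by regularizing the flow, solving the regularized ODEs in a high regularity space, and passing to the limit using the uniform $\H^1$ bounds together with compactness; (iii) establish uniqueness and Lipschitz dependence at the lower level $\H$ via an energy estimate for the difference of two solutions; and (iv) upgrade to continuity of the data-to-solution map in the strong $\H^1$ topology by a Bona--Smith argument.

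The core is step (i), and the reason for working with the differentiated, diagonalized system \eqref{ww2d-diff} rather than with \eqref{ww2d1} is precisely that $(\W,R)$ is Alinhac's good variable for this problem. At the principal level \eqref{ww2d-diff} splits into the material derivative $\partial_t + b\partial_\alpha$, which is skew-adjoint modulo the bounded term $b_\alpha$ because $b$ is real, together with the dispersive coupling whose leading part is the linear system $\W_t + R_\alpha = 0$, $R_t - i\W = 0$. Since $i\partial_\alpha$ acts as $|D|$ on holomorphic functions, the conserved quadratic energy of this linear system is exactly $\|\W\|_{L^2}^2 + \|R\|_{\dot H^\frac12}^2$, matching the space $\H$. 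Guided by this, I would take $E^1$ to be the sum of the quadratic energies of $(\W,R)$ and of $(\partial_\alpha \W,\partial_\alpha R)$, with the constant coefficients replaced by the variable weights dictated by the quasilinear structure (powers of $|1+\W|^{-1}$ and similar), plus cubic correction terms. Differentiating $E^1$ and substituting \eqref{ww2d-diff}: one integrates by parts in the transport contributions, lets the symmetrization absorb the principal dispersive coupling, and estimates the remainder; the auxiliary quantities $a$, $M$ and the remainder of $b$ defined by \eqref{defb}, \eqref{defa}, \eqref{M-def} must then be shown to be genuinely perturbative, i.e. quadratic with the derivatives kept in balance by the projections $P,\bar P$ so that nothing is lost.

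The hard part is the apparent loss of a derivative in the quasilinear terms — the factor $(1+\W)R_\alpha/(1+\bar\W)$ in the first equation and the inhomogeneity $M$ both carry a derivative that cannot be absorbed by brute force, and the normal form transformation that would formally remove the offending quadratic interactions is itself unbounded. The way around this is to exploit the algebra of holomorphic functions and the (skew-)adjointness structure visible in these coordinates: integration by parts combined with the adjoint relations for $P$ and $\bar P$, the cancellation identities for products of holomorphic functions, and the precise placement of the derivative in $M$ through $Y = \W/(1+\W)$, so that the dangerous terms either cancel in pairs or are rebalanced against the cubic corrections built into $E^1$. A secondary technical point is the low frequency sensitivity of the homogeneous norms $\dot H^\frac12$ and $\dot H^1$; this is again handled by staying with the differentiated system, so that $W$ never appears undifferentiated.

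With the a priori $\H^1$ estimate in hand, the remaining steps are standard. For existence I would regularize \eqref{ww2d-diff} — mollifying the transport coefficient and the nonlinear terms, or inserting a frequency cutoff — so that the regularized evolution is a locally Lipschitz ODE in a high regularity ball; the energy estimate survives the regularization, the approximate solutions are uniformly bounded in $\H^1$ on a fixed interval, and a weak-compactness argument (with the equation supplying strong convergence at lower regularity) produces a weakly continuous $\H^1$-valued solution. Uniqueness and Lipschitz dependence at the level of $\H$ then follow from the difference estimate. Finally, approximating the data by smooth data and combining the high regularity bound with a frequency-envelope refinement of the $\H^1$ estimate yields, via the Bona--Smith method, continuity in time with values in $\H^1$ together with continuity of the solution map, completing the proof; the full argument is carried out in \cite{HIT}.
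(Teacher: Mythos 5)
Your sketch is correct and follows essentially the same route as the cited source \cite{HIT}, whose machinery this paper recalls: a quasilinear modified energy functional with cubic corrections in place of the unbounded normal form (cf.\ Theorem~\ref{t:energy}), difference/uniqueness bounds at the weaker level $\H$ via the linearized equation in the good variables $(w,r)$ (cf.\ Theorem~\ref{t:lin}), and a frequency-envelope/Bona--Smith argument for rough data as in Section~4.5 of \cite{HIT}. The only refinement worth noting is that the actual energy estimates close in terms of the uniform control norms $A$ and $B$ rather than the full $\H^1$ norm, which is stronger than what bare local well-posedness requires but is what the rest of this paper relies on.
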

This is a quasilinear well-posedness result, so it includes existence, uniqueness, continuous 
dependence on the initial data in the strong topology $\H^1$ as well as Lipschitz dependence on the data
in a weaker topology $\H$.

Furthermore, in \cite{HIT} a cubic lifespan result was proved in the same spaces:

\begin{theorem}[\cite{HIT}] \label{t:cubic}
Consider the system \eqref{ww2d-diff} with initial data satisfying 
\[
\| (\W,R)(0)\|_{\H^1} \leq \epsilon.
\]
Then the solution $(W,R)$ exists at least for a lifespan
\[
T_\epsilon = \frac{c}{\epsilon^2}
\]
with uniform bounds,
\begin{equation}
\| (\W,R)(t)\|_{\H^1} \lesssim \epsilon, \qquad |t| \leq T_\epsilon.
\end{equation}
\end{theorem}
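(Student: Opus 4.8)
The plan is to upgrade the short-time existence from Theorem~\ref{t:lwp} to the cubic time scale by a bootstrap argument driven by an \emph{almost conserved energy}. By Theorem~\ref{t:lwp} the solution exists on a maximal interval and can be continued as long as $\|(\W,R)\|_{\H^1}$ stays bounded, so it suffices to show that if $\|(\W,R)(0)\|_{\H^1}\le\epsilon$ then the bound $\|(\W,R)(t)\|_{\H^1}\lesssim\epsilon$ persists for $|t|\le c\epsilon^{-2}$. The starting point is an energy functional $E^1(\W,R)$ equivalent to $\|(\W,R)\|_{\H^1}^2$, whose $\dot H^{1/2}$ part is built from the one-derivative good variables of \eqref{ww2d-diff} together with a suitable (para)differential weight chosen to symmetrize the leading part of the flow. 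Differentiating $E^1$ along a solution of \eqref{ww2d-diff}, and using that the system is already in symmetrizable good-variable form — the transport operator $b\partial_\alpha$ being self-adjoint modulo the lower order term $b_\alpha$, and the auxiliary quantities $a$ from \eqref{defa} and $M$ from \eqref{M-def} being arranged precisely so as to cancel the worst contributions — one obtains an identity of the schematic form
\[
\frac{d}{dt}E^1 \;=\; \mathcal C^1(\W,R)\;+\;\mathcal Q^1(\W,R),
\]
where $\mathcal C^1$ gathers the cubic terms and $\mathcal Q^1$ is quartic-and-higher. The tail $\mathcal Q^1$ is harmless: the one-dimensional embedding $\H^1\hookrightarrow L^\infty$ bounds it by $\|(\W,R)\|_{\H^1}^2\cdot\|(\W,R)\|_{\H^1}^2$.

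The crux is the cubic part $\mathcal C^1$, which is exactly where the naive estimate stalls at the $\epsilon^{-1}$ time scale. I would split $\mathcal C^1$ into two pieces. The first consists of exact time derivatives $\tfrac{d}{dt}B^1$ of cubic expressions $B^1$ obeying $|B^1|\lesssim\|(\W,R)\|_{L^\infty}E^1$; these are moved to the left to define a corrected energy $E_{NF}^1:=E^1-B^1$, which, since $\|(\W,R)\|_{\H^1}\lesssim\epsilon\ll1$, remains equivalent to $\|(\W,R)\|_{\H^1}^2$. The second consists of cubic terms that are not perfect derivatives; here one invokes the absence of quadratic resonances for this dispersion relation, noted in the introduction, which at the level of the energy identity manifests as a null/holomorphic-projection structure in these terms (they pair a $P$ factor against a $\bar P$ factor in a way that forces a high$\times$low$\times$low frequency balance). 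Consequently each such term is controlled by $\|(\W,R)\|_{\H^1}^2$ times an $L^\infty$-type norm of a single factor, hence by $\epsilon^2E_{NF}^1$ after invoking the bootstrap hypothesis. Altogether this gives the differential inequality
\[
\frac{d}{dt}E_{NF}^1 \;\lesssim\; \epsilon^2\,E_{NF}^1\;+\;\|(\W,R)\|_{\H^1}^4 .
\]

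Integrating by Gr\"onwall over $|t|\le c\epsilon^{-2}$ gives $E_{NF}^1(t)\lesssim\epsilon^2 e^{Cc}$, and since $E_{NF}^1$ is equivalent to $\|(\W,R)\|_{\H^1}^2$ this recovers $\|(\W,R)(t)\|_{\H^1}\lesssim\epsilon$ provided $c$ is chosen small enough that $e^{Cc}$ stays within the admissible range. Feeding this back into the continuation criterion of Theorem~\ref{t:lwp} extends the solution to the whole interval $|t|\le T_\epsilon=c\epsilon^{-2}$ with the asserted uniform bound.

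I expect the main obstacle to be the careful bookkeeping behind the construction of the cubic correction $B^1$ and, simultaneously, the verification that the residual non-removable cubic terms genuinely carry the null structure: one must track which terms come with $P$ versus $\bar P$ projections, and the cancellations hidden inside $a$, $M$ and the factors $(1+\W)^{\pm1}$, $(1+\bar\W)^{\pm1}$, in order to see that after the normal form correction no uncancelled cubic term of size $\epsilon^3$ survives. A further technical point is that the whole procedure must be carried out at the level of energy functionals rather than by an honest change of unknown on $(\W,R)$, which would lose a derivative; this is the quasilinear modified-energy method, and arranging $B^1$ to be simultaneously bounded and effective is the delicate part.
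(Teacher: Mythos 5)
Your overall strategy is the right one and matches what the cited reference does: this theorem is quoted from \cite{HIT} and is not reproved in the paper, but the machinery behind it is exactly the quasilinear modified energy method recalled as Theorem~\ref{t:energy}, namely an energy $E^1$ with norm equivalence $E^1=(1+O(A))\|(\W,R)\|_{\H^1}^2$ and the flux bound $\frac{d}{dt}E^1\lesssim AB\,\|(\W,R)\|_{\H^1}^2$, combined with a bootstrap and the continuation criterion from Theorem~\ref{t:lwp}. Your identification of the main difficulty (constructing the cubic correction at the level of the energy rather than as a change of unknown, to avoid derivative loss) is also correct.

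However, there is a genuine gap in your treatment of the cubic terms that you do \emph{not} absorb into the correction $B^1$. You bound each such term by $\|(\W,R)\|_{\H^1}^2$ times an $L^\infty$-type norm of a single factor; under the bootstrap hypothesis that is $O(\epsilon)\cdot\|(\W,R)\|_{\H^1}^2=O(\epsilon\,E^1_{NF})=O(\epsilon^3)$, not the $O(\epsilon^2 E^1_{NF})=O(\epsilon^4)$ you write. With only $\frac{d}{dt}E^1_{NF}\lesssim \epsilon\,E^1_{NF}$, Gr\"onwall over $|t|\le c\epsilon^{-2}$ produces a factor $e^{Cc/\epsilon}$ and the bootstrap does not close; a direct estimate of a genuinely cubic flux term can never reach the cubic time scale, since it can carry at most one uniform ($L^\infty$/BMO) factor. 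The point of the \cite{HIT} construction, reflected in the two uniform factors $A$ and $B$ in Theorem~\ref{t:energy}, is that \emph{all} cubic contributions to $\frac{d}{dt}E^1$ are removable: the dispersion relation $\tau=\pm\sqrt{|\xi|}$ admits no nontrivial quadratic resonances, so the symbol one divides by in constructing $B^1$ never vanishes, and the residual flux is genuinely quartic, $\lesssim AB\|(\W,R)\|_{\H^1}^2\lesssim\epsilon^4$, which integrates to $O(c\,\epsilon^2)$ over the cubic lifespan. Your ``null structure'' observation about $P$ versus $\bar P$ pairings is relevant to the wave-packet analysis later in the paper, but for general $\H^1$ data of size $\epsilon$ it yields no extra power of $\epsilon$; you must show the leftover cubic terms vanish (or are perfect time derivatives modulo quartic errors), not merely that they are small.
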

This result is accompanied in \cite{HIT} by uniform bounds for the linearized equation 
for the undifferentiated system \eqref{ww2d1} in the
space $\H$, which are heavily used here and are recalled in Section~\ref{s:energy}.

\subsection{The NLS approximation to the water wave system}
In this article we consider a different class of solutions,
which corresponds to the cubic NLS approximation to the 
gravity wave system \eqref{ww2d1}. To describe this approximation
we begin with the linearized evolution around the zero solution,
which has the form
\begin{equation}
\label{ww2d-lin0}
\left\{
\begin{aligned}
& w_t + q_\alpha = 0 \\
& q_t  -i w   = 0, \\
\end{aligned}
\right.
\end{equation}
where $(w,q)$ are restricted to the space of holomorphic functions, i.e., only with negative frequencies.

The dispersion relation for this linear evolution has two branches, namely 
\[
\tau = \omega_{\pm}(\xi):=\pm \sqrt{|\xi|}, \qquad \xi \leq 0,
\]
where $\tau$ stands for the time Fourier variable, and $\xi$ is the
spatial Fourier variable.  The two branches correspond to linear waves
which travel to the right, respectively to the left.  The NLS
approximation applies to solutions which are localized on a single
branch, and furthermore are also localized near a single frequency
$\xi_0$. In view of the scaling symmetry and the time reversal
symmetry, we can without any restriction of generality, set
$\xi_0 = -1$ and choose the ``$+$'' sign.

The quadratic approximation for the ``$+$'' dispersion relation near $\xi_0 = - 1$ is 
\[
\tau =\omega_0(\xi) :=  1 - \frac12 (\xi+1)  - \frac18 (\xi+1)^2, 
\]
which corresponds to the linear evolution 
\begin{equation}
(i \partial_t + \omega_0(D)) y  = 0.
\end{equation}
This can be recast as the linear Schr\"odinger flow 
\begin{equation}
\left(i \partial_t + \frac18 \partial_x^2\right) u = 0
\end{equation}
via the transformation
\[
y(t,x) = e^{i t} e^{-i  x} u(t,x  -\frac12 t). 
\]

Returning now to the linearized water wave system \eqref{ww2d-lin0},
solutions near frequency $-1$ on the $+$ branch should be well approximated
by the functions 
\[
(w,q) \approx ( y, y).
\]
For a more quantitative analysis, suppose that we are looking at
solutions concentrated in an $\epsilon$ neighbourhood of $\xi_0 = 1$,
then we have the difference relation
\[
|\omega_0 - \omega_+| \lesssim \epsilon^3.
\]
Thus the  linear evolution operators $e^{it \omega_0(D)}$ are a good approximation for 
$e^{it \omega_+(D)}$ on a time scale 
\[
|t| \ll \epsilon^{-3},
\]
but (the difference) stays  within $O(\epsilon)$ 
only on  a time scale 
\[
|t| \ll \epsilon^{-2}.
\]

Now we consider the nonlinear setting. We begin with a solution $U$ for the NLS equation
\begin{equation}\label{NLS}
(i \partial_t + \frac{1}{8} \partial_x^2) U =  \lambda U |U|^2,
\end{equation}
where $\lambda$ remains to be chosen later. We rescale this to the $\epsilon$ frequency scale,
\begin{equation}
U^\epsilon(t,x) := \epsilon U(\epsilon^2 t,\epsilon x) ,
\end{equation}
which still solves \eqref{NLS}, and then define $Y^\epsilon$ by 
\begin{equation}\label{Ye-def}
Y^\epsilon(t,x) := e^{i t} e^{-i  x} U^{{ \epsilon}}(t,x  -\frac12 t) ,
\end{equation}
which solves an NLS equation with the dispersion relation given by $\omega_0$,
\begin{equation}\label{NLS-y}
(i \partial_t + \omega_0(D)) Y^{\epsilon} =    \lambda Y^\epsilon |Y^\epsilon|^2.
\end{equation}
 Thus $Y^\epsilon$ is localized around frequency $\xi_0 = - 1$ on the $\epsilon$ frequency scale. We remark that 
nonlinear effects are first seen on the unit time scale for $U$, which corresponds to the 
$\epsilon^{-2}$ time scale for $Y$.

Then we ask whether, for a well chosen $\lambda$, 
we can find a solution $(W,Q)$ for the water wave equation \eqref{ww2d1}  
which stays close to $(Y^\epsilon,Y^\epsilon)$ on a timescale
\[
T_\epsilon := T \epsilon^{-2}.
\]
We remark that here it is important that $T$ is independent of $\epsilon$, as on shorter 
time scales  the linear Schr\"oedinger equation is also a good approximation. It is also interesting to allow $T$ 
to be arbitrarily large, as this would show that the water wave equation captures long time NLS dynamics.
This motivates our main result:

\begin{theorem} \label{t:NLS-approx} Let $U_0 \in H^3$, let $U$ be the
  corresponding solution to the cubic NLS equation \eqref{NLS} with
  $\lambda = -\frac12$, and let $Y^\epsilon$ be as in
  \eqref{Ye-def}.  Let $T > 0$. Then there exists
  $0 < \epsilon_0 = \epsilon_0(\|U_0\|_{H^3},T)$ so that for each
  $0 < \epsilon < \epsilon_0$ there exists a solution $(W,Q)$ to
  \eqref{ww2d1} for $t$ in the time interval
  $\{|t| \leq T_\epsilon:= T \epsilon^{-2}\}$ satisfying the following
  estimates:
\begin{equation}\label{error-lo}
\| (W - Y^\epsilon, Q-Y^\epsilon)\|_{\H} \lesssim \epsilon^\frac32,
\end{equation}
\begin{equation}\label{error-hi}
\| (\W - iY^\epsilon, R-iY^\epsilon)\|_{\H^1} \lesssim \epsilon^{\frac32}.
\end{equation}
\end{theorem}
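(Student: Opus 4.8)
The plan is to construct the water wave solution by a bootstrap/continuity argument built on top of the cubic lifespan Theorem~\ref{t:cubic}, treating $Y^\epsilon$ as an approximate solution and estimating the error. First I would insert the ansatz $(W,Q) = (Y^\epsilon, Y^\epsilon) + (\tilde W, \tilde Q)$ into \eqref{ww2d1} and compute the source terms generated by $Y^\epsilon$. Since $Y^\epsilon$ solves the NLS flow \eqref{NLS-y}, which incorporates the quadratic dispersion $\omega_0$ and the resonant cubic self-interaction $\lambda Y^\epsilon |Y^\epsilon|^2$, the residual in \eqref{ww2d1} should consist of: (i) the mismatch between $\omega_0(D)$ and the true linear part $\omega_+(D)$, which is $O(\epsilon^3)$ per unit frequency and $O(\epsilon^{7/2})$ in $\H$ after accounting for the $\epsilon^{1/2}$ loss from the $\epsilon$-scale localization; (ii) quadratic terms in $Y^\epsilon$, which are non-resonant and must be removed by a normal form transformation (this is where the ``$NF$'' superscripts in the macro list come in); (iii) the difference between the resonant part of the true cubic water wave nonlinearity restricted to frequency $-1$ and the NLS cubic term $-\frac12 Y^\epsilon|Y^\epsilon|^2$ — the precise value $\lambda = -\frac12$ must be chosen exactly so that this difference vanishes to leading order; and (iv) higher order terms. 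The key bookkeeping is that, on the time scale $T\epsilon^{-2}$, a source of size $O(\epsilon^{7/2})$ in $\H$ integrates to $O(T \epsilon^{3/2})$, which is the claimed bound \eqref{error-lo}.

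The second main step is to set up the right functional framework for the error equation. I would write the equation for $(\tilde W, \tilde Q)$, or better for the differentiated good variables $(\tilde{\W}, \tilde R) := (\W - iY^\epsilon, R - iY^\epsilon)$ (note $\W = W_\alpha$ and near frequency $-1$ one has $W_\alpha \approx -i W$, hence the factor $i$), as a linear equation with variable coefficients — namely the linearization of \eqref{ww2d-diff} around the approximate solution — plus the source terms. For this one uses the linearized energy estimates for \eqref{ww2d1} in $\H$ and for \eqref{ww2d-diff} in $\H^1$ that are cited from \cite{HIT} and recalled in Section~\ref{s:energy}. The bootstrap assumption would be that $\|(\tilde{\W}, \tilde R)\|_{\H^1} \lesssim \epsilon^{3/2}$ on a subinterval; under this assumption the coefficients $b$, $a$, $M$ of the linearized flow are controlled by the $\H^1$ norm of $(\W, R) \approx i Y^\epsilon$, which is $O(\epsilon)$, so the linearized energy grows like $e^{C\epsilon^2 t}$, i.e. stays bounded up to time $T\epsilon^{-2}$. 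Combined with the $O(\epsilon^{7/2})$ source over a time of length $T\epsilon^{-2}$, Duhamel/Gronwall closes the bootstrap with a constant strictly better than the one assumed, provided $\epsilon$ is small depending on $T$ and $\|U_0\|_{H^3}$. The regularity $U_0 \in H^3$ is what is needed so that $Y^\epsilon$ and its first two $\alpha$-derivatives are controlled, feeding the $\H^1$-level estimate \eqref{error-hi}; the $\H$-level estimate \eqref{error-lo} then follows either directly from the undifferentiated linearized bound or by integrating \eqref{error-hi} in frequency with the $\epsilon$-scale gain.

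The main obstacle, I expect, is the normal form analysis handling the non-resonant quadratic interactions of $Y^\epsilon$ in step (ii), together with verifying that the normal form transformation is bounded and invertible on the relevant spaces and does not itself generate uncontrolled errors. Water wave normal forms are delicate because of the quasilinear, degenerate-hyperbolic structure (the double speed, the derivative in the nonlinearity), so one cannot use a naive Poincar\'e–Dulac substitution; instead one needs a carefully chosen quadratic correction to the ansatz — effectively modifying $Y^\epsilon$ to $Y^\epsilon + (\text{quadratic in } Y^\epsilot)$ — so that after substitution the quadratic residual is genuinely cubic, and the new quadratic piece is $O(\epsilon^2)$ in $\H$ hence harmless. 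A secondary technical point is matching the resonant cubic coefficient: one must expand the full cubic nonlinearity of \eqref{ww2d1} (including the contributions of $F$, the projection $P$, and the $|Q_\alpha|^2/J$ term), restrict to the frequency-$-1$ resonant set, and check that it reduces exactly to $-\frac12 Y^\epsilon |Y^\epsilon|^2$; this is the computation that pins down $\lambda = -\frac12$ and, if it came out differently, the theorem would be false. Everything else — the linearized energy estimates, the Gronwall argument, the frequency-localization bookkeeping — is, modulo care, routine given the cited results.
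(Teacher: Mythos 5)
Your overall architecture --- build an approximate water-wave solution out of $Y^\epsilon$ by normal forms, then pass to the exact solution using the linearized energy estimates of \cite{HIT} on the time scale $T\epsilon^{-2}$ --- is the same as the paper's, and your bookkeeping (an $\epsilon^{7/2}$ source integrating to $\epsilon^{3/2}$, the role of $\lambda=-\frac12$, the use of $U_0\in H^3$ to control $(\omega_0(D)-|D|^{1/2})Y^\epsilon$) is correct. But there are two genuine gaps. The first, and more serious, is that a \emph{quadratic} correction to the ansatz is not enough. After removing the quadratic terms, the residual still contains cubic \emph{non-resonant} terms: the cubic terms with zero or two complex conjugates, and the cubic-in-$Y^+$ source driving the negative branch $Y^-$ in \eqref{nft1res-y}. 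A generic cubic expression in $Y^\epsilon$ has $L^2$ size $\|Y^\epsilon\|_{L^2}\|Y^\epsilon\|_{L^\infty}^2\sim\epsilon^{1/2}\cdot\epsilon^2=\epsilon^{5/2}$; left in the source, it integrates to $O(T\epsilon^{1/2})$ over $|t|\le T\epsilon^{-2}$, which is not even small. This is exactly why the paper's normal form \eqref{nft3} carries an explicit cubic correction eliminating the non-resonant cubic terms, why the ansatz \eqref{YtoWQ} includes the cubic negative-branch component $Y^-=-\frac14\tY^\epsilon|\tY^\epsilon|^2$ from \eqref{chooseY-}, and why the surviving cubic terms with one conjugate are split into a resonant part (matched with the NLS nonlinearity) and \emph{null} parts that vanish on three collinear wave packets, so that the frequency localization $\|(D+1)\tY^\epsilon\|_{L^2}\lesssim\epsilon^{3/2}$ upgrades them to $\epsilon^{7/2}$. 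Your item (iii) addresses only the resonant piece; the non-resonant cubic piece is where most of the work lies, and without it the scheme does not close.

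The second gap is in the comparison step. Writing a variable-coefficient linear equation for the difference $(W-W^\epsilon,\,Q-Q^\epsilon)$ plus a source leaves quadratic-in-difference remainders; under your bootstrap these are of size roughly $\epsilon^{3}$ in $L^2$, which after pairing with the difference and integrating over $T\epsilon^{-2}$ produces $\epsilon^{5/2}$ against a required energy of $\epsilon^{3}$, so the Gronwall argument does not close naively. The paper sidesteps this entirely: it introduces a one-parameter family $(W(t,s),Q(t,s))$ of \emph{exact} solutions with data $(W^\epsilon(s),Q^\epsilon(s))$ at $t=s$, applies Theorem~\ref{t:lin} to $\partial_s(W,Q)$ --- which solves the linearized equation exactly, with no quadratic remainder and with data of size $\epsilon^{7/2}$ --- and integrates in $s$. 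Relatedly, \eqref{error-hi} is not obtained by a direct $\H^1$ bootstrap on the difference: the linearized estimate only controls low norms ($L^2$ for $w$, $H^{-1/2}$ for $R_s$), which are interpolated against the high-frequency $\H^N$ bound $\epsilon^{1/2}$ from Theorem~\ref{t:energy}, first yielding only $\epsilon^{3/2-\delta}$; the loss is recovered in Section 6 by propagating a bound for $(D+1)(\tW,\tQ)$ using the phase-shift invariance of the cubic resonant nonlinearity. You would need some version of both devices for your plan to go through.
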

To compare this with the cubic lifespan bound in Theorem~\ref{t:cubic} we 
note that $Y^\epsilon$ (and also $W,\W$ and $R$) satisfies the $L^2$ bound 
\[
\| Y^\epsilon \|_{L^2} \lesssim \epsilon^\frac12,
\]
which is weaker than the $\epsilon$ bound in Theorem~\ref{t:cubic}.

However, here we have the additional bound
\[
\| (D+1) Y^\epsilon\|_{L^2} \lesssim \epsilon^\frac32,
\]
which expresses the localization around frequency $1$. Because of this,
the pointwise bound for $Y^\epsilon$ improves to 
\[
\|Y^\epsilon\|_{L^\infty} \lesssim \epsilon,
\]
which is consistent with the pointwise bounds for the solutions 
 in Theorem~\ref{t:cubic}. This fact is best expressed in terms of 
the control norms $A$ and $B$ introduced in \cite{HIT},  which correspond to uniform  bounds for
  $(\W, R)$ and will be described later in Section~\ref{s:energy}.

  We further remark that the solution $(W,Q)$ obtained in the proof of
  the theorem is actually smooth.  Precisely, we will prove the
  stronger high frequency bound
\begin{equation}\label{error-hihi}
  \| (\W - iY^\epsilon_{\lesssim 1}, R-iY^\epsilon_{\lesssim 1})\|_{\H^{k}} 
\lesssim_k \epsilon^{\frac32}, \qquad k \geq 0.
\end{equation}

Finally, we also show that the bounds for $(W,Q)$ given by Theorem~\ref{t:NLS-approx}
are stable with respect to small $O(\epsilon^{1+})$ perturbations, in a way that is consistent with 
Theorem~\ref{t:cubic}:

\begin{theorem} \label{t:NLS-approx+}
  Let $U_0,U,Y^\epsilon$  be as in Theorem~\ref{t:NLS-approx}. Let $0 < \delta_1 < \delta \leq  \frac12$,
and $(W_0,Q_0)$ be initial data satisfying 
\begin{equation}\label{error-lo+0}
\| (W_0 - Y^\epsilon_0, Q_0-Y^\epsilon_0)\|_{\H} \lesssim \epsilon^{1+\delta},
\end{equation}
\begin{equation}\label{error-hi+0}
\| (\W_0 - iY^\epsilon_0, R_0-iY^\epsilon_0)\|_{\H^1} \lesssim \epsilon^{1+\delta}.
\end{equation}

Then the corresponding solution $(W,Q)$ to the system \eqref{ww2d1} exists  for $t$ in the interval $\{|t| \leq T_\epsilon:= T \epsilon^{-2}\}$ and satisfies the
  following estimates:
\begin{equation}\label{error-lo+}
\| (W - Y^\epsilon, Q-Y^\epsilon)\|_{\H} \lesssim \epsilon^{1+\delta_1},
\end{equation}
\begin{equation}\label{error-hi+}
\| (\W - iY^\epsilon, R-iY^\epsilon)\|_{\H^1} \lesssim \epsilon^{1+\delta_1}.
\end{equation}
\end{theorem}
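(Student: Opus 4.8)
The plan is to deduce Theorem~\ref{t:NLS-approx+} from Theorem~\ref{t:NLS-approx} together with the cubic lifespan bound of Theorem~\ref{t:cubic}, treating the difference between the perturbed solution and the reference approximation as a small error that is propagated by the linearized flow. The key point is that Theorem~\ref{t:NLS-approx} already supplies a bona fide water wave solution $(W,Q)$ which lies within $O(\epsilon^{3/2})$ of $(Y^\epsilon,Y^\epsilon)$ in $\H$ and within $O(\epsilon^{3/2})$ of $(iY^\epsilon,iY^\epsilon)$ in $\H^1$, and in particular satisfies the control norm bounds $A \lesssim \epsilon$, $B \lesssim \epsilon^2$ coming from the localization of $Y^\epsilon$ near frequency $-1$. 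One should therefore compare the perturbed solution with this $(W,Q)$ rather than directly with $Y^\epsilon$: by the triangle inequality the hypotheses \eqref{error-lo+0}, \eqref{error-hi+0} become
\begin{equation*}
\| (W_0 - W(0), Q_0 - Q(0))\|_{\H} \lesssim \epsilon^{1+\delta}, \qquad
\| (\W_0 - \W(0), R_0 - R(0))\|_{\H^1} \lesssim \epsilon^{1+\delta},
\end{equation*}
since $\delta \le \frac12$ and $\epsilon^{3/2} \lesssim \epsilon^{1+\delta}$.

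First I would set up the difference functions $(\delta W,\delta Q) := (W_0,Q_0)$-solution minus $(W,Q)$, and similarly $(\delta\W,\delta R)$ at the differentiated level, and write the equations they satisfy. Because both solutions solve the same quasilinear system \eqref{ww2d1} (resp. \eqref{ww2d-diff}), the difference solves an equation which is, to leading order, the linearized water wave equation around $(W,Q)$, plus quadratic-and-higher error terms in the difference itself. Here I would invoke the uniform bounds for the linearized equation around a cubic-lifespan solution, recalled from \cite{HIT} in Section~\ref{s:energy}: since $(W,Q)$ has control norms of size $A\lesssim\epsilon$, the linearized flow in $\H$ (and, after differentiation and using the good-variable structure, in $\H^1$) obeys an energy estimate of the schematic form $\frac{d}{dt}\| \delta\|_{\H}^2 \lesssim (A^2 + AB)\|\delta\|_{\H}^2 \lesssim \epsilon^2 \|\delta\|_{\H}^2$, which over the time interval $|t|\le T\epsilon^{-2}$ only loses a constant factor $e^{CT}$. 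The nonlinear self-interaction terms of the difference contribute, by the same multilinear estimates used in \cite{HIT}, a source term bounded by $\lesssim (A+\|\delta\|)\|\delta\|^2 \lesssim \epsilon \|\delta\|^2$ plus terms with more derivatives handled via the $\H^1$ bound; a continuity/bootstrap argument then shows these are harmless as long as $\|\delta\|$ stays $\lesssim \epsilon^{1+\delta_1}$, which is a much smaller quantity than $\epsilon$.

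The bootstrap itself is the organizing device: assume on a maximal subinterval that $\| (\delta W,\delta Q)\|_\H + \| (\delta\W,\delta R)\|_{\H^1} \le C_1 \epsilon^{1+\delta_1}$; feed this into the linearized energy estimates to get $\frac{d}{dt}\big(\|\delta\|^2\big) \lesssim \epsilon^2 \|\delta\|^2 + \epsilon \cdot \epsilon^{3(1+\delta_1)} \cdot(\text{lower order})$, hence by Gronwall on $|t|\le T\epsilon^{-2}$ one recovers $\|\delta(t)\|^2 \lesssim e^{CT}\big(\epsilon^{2(1+\delta)} + \epsilon^{2+3\delta_1}\big)$; since $\delta_1 < \delta$ and $2+3\delta_1 > 2(1+\delta_1)$, and $\epsilon^{2(1+\delta)} \ll \epsilon^{2(1+\delta_1)}$ for $\epsilon$ small depending on $T$, this closes the bootstrap with a constant strictly better than $C_1$, once $\epsilon_0$ is taken small enough depending on $\|U_0\|_{H^3}$, $T$, $\delta$, $\delta_1$. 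This simultaneously gives existence on the full interval (the difference cannot blow up, and $(W,Q)$ exists there by Theorem~\ref{t:NLS-approx}) and the desired bounds \eqref{error-lo+}, \eqref{error-hi+} after one more triangle inequality reintroducing $Y^\epsilon$. The main obstacle I anticipate is not conceptual but bookkeeping: one must check that the difference equation at the \emph{differentiated} level $\H^1$ genuinely inherits the good-variable diagonalization, so that the quasilinear energy estimate applies with the small coefficient $\epsilon^2$ rather than $\epsilon$ — i.e. that the dangerous transport and $a$, $b$, $M$ terms, when linearized and evaluated at the small-amplitude solution $(W,Q)$, still produce only cubic-in-smallness growth; this is exactly the structure exploited in \cite{HIT} for Theorem~\ref{t:cubic}, so it should transfer, but it requires care with the frequency-shift term $a$ and with the balance of derivatives in the $\H^1$ estimate.
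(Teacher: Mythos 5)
Your overall strategy --- perturb around the exact solution of Theorem~\ref{t:NLS-approx} and propagate the difference with the linearized energy estimates --- is the right starting point, and is how the paper begins. But there are two genuine gaps. First, the linearized estimate recalled from \cite{HIT} (Theorem~\ref{t:lin}) lives only in $\H = L^2\times\dot H^{1/2}$; there is no $\H^1$ version, and your parenthetical ``after differentiation and using the good-variable structure, in $\H^1$'' is exactly the missing step, not a routine one. The paper never differentiates the linearized equation. Instead it controls the difference in higher norms by \emph{interpolating} the weak-norm difference bound $O(\epsilon^{1+\delta})$ (obtained in $\H$, and in the even weaker $H^{-1/2}$ for $R$) against the uniform high-norm bounds $\|(\W,R)\|_{\H^N}\lesssim \epsilon^{1/2}$ that hold for each solution separately. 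That interpolation is precisely where the exponent degrades from $\delta$ to $\delta_1<\delta$; in your accounting the loss has no source (your Gronwall step only costs $e^{CT}$ and your nonlinear remainders are higher order), which is a symptom of the missing $\H^1$ propagation. Relatedly, the paper avoids your ``quadratic-and-higher remainder in the difference'' bookkeeping altogether: it connects the two data sets by a one-parameter family $(W^h_0,Q^h_0)$, applies Theorem~\ref{t:lin} to the exact linearized quantity $(w,q)=\partial_h(W,Q)$ with the good variable $r=q-Rw$, and integrates in $h$; only the exact linearized equation is ever needed, and the same family feeds the control-norm bootstrap.

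Second, you have not addressed the regularity of the perturbed data. The hypotheses only place $(\W_0-iY^\epsilon_0, R_0-iY^\epsilon_0)$ in $\H^1$, so the perturbed solution need not be smooth, and the high-norm $\H^N$ bounds (and the smooth dependence needed to run the linearized equation along the family) are not available directly. The paper handles this in a separate second step: it first proves the result for data frequency-localized at frequencies $\lesssim 1$, and then builds the rough solution by adding frequency layers $(\W^1_{0,\le k}, R^1_{0,\le k})$ with a frequency-envelope argument as in \cite{HIT}, Section 4.5, using Theorem~\ref{t:lin} for the differences between consecutive truncations together with interpolation to place each layer at frequency $2^k$. Without some version of this step your argument only establishes the theorem for smooth, frequency-localized perturbations. (A minor further point: your claim $B\lesssim\epsilon^2$ for the reference solution is not right --- $R_\alpha\sim iY^\epsilon$ has size $\epsilon$ in $L^\infty$, so $B\lesssim\epsilon$; the cubic energy estimates still give the $e^{CT}$ Gronwall factor because they depend on the product $AB\lesssim\epsilon^2$.)
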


We have stated this separately from Theorem~\ref{t:NLS-approx} since
its proof has nothing to do with the NLS approximation, once the
solution $(W,Q)$ in Theorem~\ref{t:NLS-approx} is given. This result
likely holds as well with $\delta_1 = \delta = 0$, but then the proof
becomes considerably more technical.

We remark that results similar in spirit have been proved earlier, first in 
  \cite{MR2800153} for a model problem, and then in \cite{MR2891875}
for this problem. Compared with both of these works our approach here 
is considerably  simpler and shorter, and yields stronger results. There 
are three ingredients which enable us to do that, which we hope 
will be useful also in other similar problems:

\begin{itemize}
\item The normal form analysis for the system \eqref{ww2d1}, which is from \cite{HIT}
but with further refinements.

\item The cubic energy estimates also from \cite{HIT}, and more precisely 
the fact that these estimates are based on uniform control norms rather than
Sobolev bounds.

\item The perturbative analysis based on the linearized equation, for
  which we also have cubic bound from \cite{HIT}.
\end{itemize}

The proof of Theorem~\ref{t:NLS-approx} is done in three steps. The first step is to
replace the exact solution $U$ to the NLS equation with a more regular
function  $\tU$, with an $\epsilon$ dependent truncation
scale. The price to pay is that $\tU$ is only an approximate solution to the
cubic NLS problem \eqref{NLS}. Precisely, we define
\begin{equation}
\label{truncation}
\tU := U_{\leq c\epsilon^{-1}} ,
\end{equation}
which solves the equation \eqref{NLS} with an error
\[
\tilde{f} := (i \partial_t + \frac{1}{8} \partial_x^2) \tU-  \lambda \tU |\tU|^2 ,
\]
Here $c$ is a small universal constant. Then we have:

\begin{proposition}\label{p:cut}
Let $s \geq \frac32$. Then  for the function $\tU$ above  we have

\noindent $a)$ The uniform bounds 
\begin{equation}\label{nls-en}
\| \tU\|_{H^k} \lesssim   \epsilon^{s-k}, \qquad k \geq s,
\end{equation}
\noindent $b)$ The difference bounds
\begin{equation}
\| \tU- U\|_{L^2} \lesssim \epsilon^s,
\end{equation}
\noindent $c)$ The error estimates 
\begin{equation}\label{nls-err}
\| \tilde{f}\|_{L^2} \lesssim \epsilon^{s+1}.
\end{equation}
\end{proposition}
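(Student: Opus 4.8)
The plan is to prove the three bounds as consequences of the standard well-posedness theory for cubic NLS in $H^3$ together with elementary Littlewood--Paley estimates, exploiting that $\tU = U_{\leq c\epsilon^{-1}}$ differs from $U$ only by a high-frequency tail. First I would record the background facts about $U$ itself: since $U_0 \in H^3$, the local theory for \eqref{NLS} (together with conservation of mass and, if needed, energy, or simply a continuation argument on the interval $[0,1]$ rescaled appropriately) gives a solution $U \in C([-1,1];H^3)$ with $\|U\|_{C_tH^3} \lesssim \|U_0\|_{H^3}$, hence $U \in L^\infty_t H^s$ for $0 \le s \le 3$ with constants depending only on $\|U_0\|_{H^3}$. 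Note the relevant time interval for $U$ is the unit scale, since $Y^\epsilon$ lives on $|t|\le T\epsilon^{-2}$ and the rescaling $U^\epsilon(t,x)=\epsilon U(\epsilon^2 t,\epsilon x)$ compresses $[-T\epsilon^{-2},T\epsilon^{-2}]$ to $[-T,T]$; so really one wants $U$ on $[-T,T]$, which is where the $\epsilon_0 = \epsilon_0(\|U_0\|_{H^3},T)$ dependence enters, but for Proposition~\ref{p:cut} we only need $U \in L^\infty([-T,T];H^3)$ with a $T$-dependent constant, which the subcritical theory supplies.

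For part $(a)$: write $\tU = P_{\leq c\epsilon^{-1}} U$, so for $k \ge s$ we have $\|\tU\|_{\dot H^k} \lesssim (c\epsilon^{-1})^{k-s}\|\tU\|_{\dot H^s} \lesssim \epsilon^{s-k}\|U\|_{H^s} \lesssim_{\|U_0\|_{H^3},s} \epsilon^{s-k}$ by Bernstein on the frequency-localized pieces and boundedness of $P_{\leq}$ on $\dot H^s$. Summing the dyadic pieces $\le c\epsilon^{-1}$ costs only a geometric factor. This also controls the full $H^k$ norm since the low Sobolev norms are dominated by $\|U\|_{H^s}$. For part $(b)$: $\tU - U = P_{> c\epsilon^{-1}} U$, so $\|\tU - U\|_{L^2} = \|P_{>c\epsilon^{-1}}U\|_{L^2} \lesssim (c\epsilon^{-1})^{-s}\|U\|_{\dot H^s} \lesssim \epsilon^s$, again just Bernstein/Plancherel on the high-frequency tail; here we use $s \le 3$ so that $\|U\|_{\dot H^s}$ is controlled, which is fine since the proposition is applied with $s = \frac32$.

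For part $(c)$, the error $\tilde f = (i\partial_t + \frac18\partial_x^2)\tU - \lambda \tU|\tU|^2$. Since $P_{\leq c\epsilon^{-1}}$ commutes with $i\partial_t + \frac18\partial_x^2$ and $U$ solves \eqref{NLS}, applying $P_{\leq c\epsilon^{-1}}$ to the equation for $U$ gives $(i\partial_t+\frac18\partial_x^2)\tU = \lambda P_{\leq c\epsilon^{-1}}(U|U|^2)$, hence
\[
\tilde f = \lambda\left( P_{\leq c\epsilon^{-1}}(U|U|^2) - \tU|\tU|^2 \right)
= \lambda\left( P_{> c\epsilon^{-1}}(U|U|^2) + P_{\leq c\epsilon^{-1}}(U|U|^2) - \tU|\tU|^2 \right),
\]
so I split $\tilde f$ into a high-frequency piece $P_{>c\epsilon^{-1}}(U|U|^2)$ and a ``cut mismatch'' piece $P_{\leq c\epsilon^{-1}}(U|U|^2 - \tU|\tU|^2)$. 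For the first, $\|P_{>c\epsilon^{-1}}(U|U|^2)\|_{L^2} \lesssim \epsilon^{s}\|U|U|^2\|_{\dot H^s}$ and one estimates $\|U|U|^2\|_{H^2} \lesssim \|U\|_{H^2}^3 \lesssim_{\|U_0\|_{H^3}} 1$ using that $H^2(\R)$ is an algebra, giving $\lesssim \epsilon^s \cdot \epsilon = \epsilon^{s+1}$ when $s \le 2$ (bump $\dot H^s$ to $\dot H^{s+?}$ using $U \in H^3$; with $s=\frac32$ there is ample room). For the mismatch piece, write $U|U|^2 - \tU|\tU|^2 = (U-\tU)|U|^2 + \tU(|U|^2 - |\tU|^2)$, each factor $U - \tU = P_{>c\epsilon^{-1}}U$ carrying a gain $\|U-\tU\|_{L^2}\lesssim\epsilon^s$ and the remaining factors bounded in $L^\infty$ via $H^1 \hookrightarrow L^\infty$ with constants $\lesssim_{\|U_0\|_{H^3}} 1$; this gives $\lesssim \epsilon^s$, which is weaker than $\epsilon^{s+1}$ and so I would instead exploit that the mismatch is itself supported at frequencies $\gtrsim \epsilon^{-1}$ in one of its factors — more carefully, $P_{\leq c\epsilon^{-1}}(U|U|^2 - \tU|\tU|^2)$ only survives output frequencies $\le c\epsilon^{-1}$, and a paraproduct decomposition shows such contributions require at least two factors at frequency $\gtrsim c\epsilon^{-1}$, producing a double gain $\epsilon^{2s-1}$ or better, and one checks $2s - 1 \ge s+1 \iff s \ge 2$; since the target regime is $s = \frac32$ one gets $2s-1 = 2 = s + \frac12 < s+1$.

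The honest main obstacle is therefore part $(c)$: extracting the \emph{full} gain $\epsilon^{s+1}$ rather than $\epsilon^s$. The resolution is to not measure the nonlinearity in $\dot H^s$ with only the algebra bound $\|U|U|^2\|_{\dot H^s}\lesssim 1$, but rather to use that $U$ — being the rescaled, frequency-localized profile $Y^\epsilon$ in disguise, or rather $U$ with $U_0 \in H^3$ — permits measuring $\|U|U|^2\|_{\dot H^{s+1}} \lesssim \|U\|_{H^{s+1}}^3 \lesssim_{\|U_0\|_{H^3}} 1$ whenever $s + 1 \le 3$, i.e.\ $s \le 2$; then $\|P_{>c\epsilon^{-1}}(U|U|^2)\|_{L^2}\lesssim (c\epsilon^{-1})^{-(s+1)}\|U|U|^2\|_{\dot H^{s+1}}\lesssim\epsilon^{s+1}$ directly, with no paraproduct gymnastics needed, and the mismatch term is handled the same way since $|U|^2-|\tU|^2 = |U|^2 - |P_{\leq}U|^2$ is again controlled in $H^{s+1}$ with a factor of $P_{>c\epsilon^{-1}}U$ carrying the requisite power of $\epsilon$. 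Thus the entire proposition reduces to: (i) the subcritical $H^3$ well-posedness and persistence of regularity for cubic NLS on $[-T,T]$, giving $\|U\|_{L^\infty_tH^3}\lesssim_{\|U_0\|_{H^3},T}1$; (ii) the algebra property of $H^k$, $k\ge 1$, to bound $\|U|U|^2\|_{H^k}$ for $k\le 3$; and (iii) Bernstein inequalities $\|P_{>N}g\|_{L^2}\lesssim N^{-k}\|g\|_{\dot H^k}$ and $\|P_{\le N}g\|_{\dot H^k}\lesssim N^{k-s}\|g\|_{\dot H^s}$. No other ingredient is needed, and the constants depend only on $\|U_0\|_{H^3}$ (and $T$, through the time interval on which $U$ is taken), as claimed.
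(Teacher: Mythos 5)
Parts (a) and (b) are fine and coincide with what the paper does (the paper barely spells them out: Bernstein plus the a priori $H^3$ bound for $U$ on $[-T,T]$). The genuine problem is part (c). Your final ``resolution'' estimates both the tail and the mismatch by putting $U|U|^2$ (or a factor $P_{>c\epsilon^{-1}}U$) in $\dot H^{s+1}$, which requires $U\in H^{s+1}$, i.e.\ $s\le 2$. But the proposition is stated for all $s\ge \tfrac32$ and, as the paper says explicitly, is \emph{applied with $s=3$} (not $s=\tfrac32$ as you assert) — that is exactly the value needed later to get $\|(D+1)^3\tY^\epsilon\|_{L^2}\lesssim\epsilon^{7/2}$ in the proof of Theorem~\ref{t:approx}. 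Since $U$ is only in $H^3$, your argument does not cover the case that is actually used. Your fallback claim — that in $P_{\le c\epsilon^{-1}}(U|U|^2-\tU|\tU|^2)$ a paraproduct decomposition forces at least two factors at frequency $\gtrsim\epsilon^{-1}$ — is false: one factor at frequency just above $c\epsilon^{-1}$ together with two factors at frequency $O(1)$ produces output just below $c\epsilon^{-1}$, so the single-high-factor term survives and the naive bound for it is only $\|P_{>c\epsilon^{-1}}U\|_{L^2}\|U\|_{L^\infty}^2\lesssim\epsilon^s$, one power short. (There is also an algebra slip in your displayed identity: $P_{\le}(U|U|^2)\ne P_{>}(U|U|^2)+P_{\le}(U|U|^2)$; the correct split is $P_{\le}(U|U|^2-\tU|\tU|^2)-P_{>}(\tU|\tU|^2)$, but this is cosmetic.)

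The missing idea, which is the heart of the paper's proof, is to exploit the \emph{commutator structure} in precisely that low-output, one-high-factor term: since $P_{<\epsilon^{-1}}P_{\ge\epsilon^{-1}}=0$, this term equals
$\bigl[ P_{<\epsilon^{-1}},\, P_{\ll\epsilon^{-1}}U\cdot P_{\ll\epsilon^{-1}}\bar U\bigr]P_{\ge\epsilon^{-1}}U$,
and the standard commutator estimate trades the sharp cutoff for a factor $\epsilon\,\|\partial_x(P_{\ll\epsilon^{-1}}U\cdot P_{\ll\epsilon^{-1}}\bar U)\|_{L^\infty}$, i.e.\ the extra power of $\epsilon$ is paid by a derivative landing on the \emph{low-frequency} factors (harmless, since $U\in H^2\hookrightarrow W^{1,\infty}$) rather than on the high-frequency factor $P_{\ge\epsilon^{-1}}U$ (which would cost regularity you do not have at $s=3$). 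Together with the crude $hhh$ and $hhl$ bounds $\epsilon^{3s-1}$ and $\epsilon^{2s-1/2}$, both $\le\epsilon^{s+1}$ for $s\ge\tfrac32$, this gives \eqref{nls-err} for the whole stated range. You should replace the $\dot H^{s+1}$ step by this commutator argument.
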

\noindent
This result is proved in Section~\ref{s:U-err}.

Corresponding to $\tU$ we define $\tY^\epsilon$ via the definitions
\eqref{Ye-def} and \eqref{truncation}.  The smallness of $c$ above guarantees that
$\tY^{\epsilon}$ is frequency localized in a small neighbourhood of   $\xi_0 = -1$,
\begin{equation}\label{ye-bound} 
 \| (D+1)^s \tY^\epsilon\|_{L^2} \lesssim 1.
\end{equation}
By the previous proposition, $\tY^{\epsilon}$ compares to $Y^\epsilon$ as follows
\begin{equation}\label{ye-bound-diff} 
 \| \tY^\epsilon - Y^\epsilon\|_{H^\frac52} \lesssim \epsilon^{s-\frac52}, \qquad s \geq \frac52.
\end{equation}
In addition, $\tY^\epsilon$ is an approximate solution to \eqref{NLS-y}
\begin{equation}
\label{eqn-ye}
(i \partial_t + \omega_0(D) )  \tY^\epsilon=  \lambda \tY^\epsilon |\tY^\epsilon|^2+g^\epsilon,
\end{equation}
with an error 
\[
g^\epsilon := (i \partial_t + \omega_0(D) )  \tY^\epsilon-  \lambda \tY^\epsilon |\tY^\epsilon|^2
\]
satisfying the bound
\begin{equation}\label{ge-bound}
\| g^\epsilon\|_{L^2} \lesssim \epsilon^{s+\frac72}.
\end{equation}
Later we will use this Proposition with $s = 3$. However, we are
proving it for all $s \geq 1$ in order to emphasize that the
truncation errors are much better than what is needed later on;
thus one sees that this is not the place where our estimates are tight.
The above proposition shows that it suffices to prove
Theorem~\ref{t:NLS-approx} with $Y^{\epsilon}$ replaced by
$\tY^{\epsilon}$.

 The second step is to use normal form analysis in order to construct a good enough 
approximate solution to the water wave equation which is close to $(\tY^{\epsilon},\tY^{\epsilon})$.

\begin{theorem}\label{t:approx}
  Let $\epsilon > 0$. Let $\tY^\epsilon$ be the approximate solution
  defined above to the NLS equation \eqref{NLS-y}. Then there exists a
  frequency localized approximate solution $(W^\epsilon,Q^\epsilon)$
  for the water wave equation \eqref{ww2d1} with properties as
  follows:

(i) Uniform bounds:
\begin{equation}
 \| (W^\epsilon - \tY^\epsilon, Q^\epsilon-\tY^\epsilon)\|_{\H} \lesssim \epsilon^{\frac32}, \qquad 
\| (\W^\epsilon - \tY^\epsilon_\alpha, R^\epsilon -\tY^\epsilon_\alpha)\|_{\H^1} \lesssim  \epsilon^{\frac32}.
\end{equation}

(ii) Approximate solution: 
\begin{equation}
\label{ww2d1-eps}
\left\{
\begin{aligned}
& W^\epsilon_t + F^\epsilon (1+W^\epsilon_\alpha) = g^\epsilon, \\
& Q^\epsilon_t + F^\epsilon Q^\epsilon_\alpha -i W^\epsilon + P\left[ \frac{|Q^\epsilon_\alpha|^2}{J^\epsilon}\right]  = k^\epsilon, \\
\end{aligned}
\right.
\end{equation}
where $(g^\epsilon,k^\epsilon)$ satisfy the bounds
\begin{equation}
 \| (g^\epsilon, k^\epsilon)\|_{\H^{N}} \lesssim \epsilon^{\frac72}.
\end{equation}
\end{theorem}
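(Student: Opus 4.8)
The plan is to build $(W^\epsilon, Q^\epsilon)$ by starting from the leading-order ansatz $(W^\epsilon_0, Q^\epsilon_0) = (\tY^\epsilon, \tY^\epsilon)$ and then correcting it order by order in $\epsilon$, using the normal form transformation of \cite{HIT} to eliminate the non-resonant quadratic interactions which would otherwise contaminate the error at order $\epsilon^2$. Since $\tY^\epsilon$ has amplitude $\sim\epsilon$ and is frequency localized near $\xi_0=-1$ on the $\epsilon$-scale, plugging $(\tY^\epsilon,\tY^\epsilon)$ directly into \eqref{ww2d1} produces a residual: the linear part $W_t + Q_\alpha$, $Q_t - iW$ is, after the modulation \eqref{Ye-def}, exactly the quadratic-dispersion operator $(i\partial_t+\omega_0(D))$ acting on $\tY^\epsilon$, which by \eqref{eqn-ye} equals $\lambda \tY^\epsilon|\tY^\epsilon|^2 + g^\epsilon$ up to the difference between $\omega_0$ and the true linear symbol $\pm\sqrt{|\xi|}$ — and on frequencies within $O(\epsilon)$ of $-1$ this difference is $O(\epsilon^3)$ in symbol, hence $O(\epsilon^{7/2})$ when applied to an $L^2$-function of size $\epsilon^{1/2}$. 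First I would make this precise: the linear residual is acceptable.

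Next I would track the quadratic terms in \eqref{ww2d1}. The quadratic part of $F(1+W_\alpha)$, of $FQ_\alpha$, and of $P[|Q_\alpha|^2/J]$, evaluated at $(\tY^\epsilon,\tY^\epsilon)$, are bilinear expressions in $\tY^\epsilon$ and $\overline{\tY^\epsilon}$; because $\tY^\epsilon$ lives near frequency $-1$, the self-interactions $\tY^\epsilon\cdot\tY^\epsilon$ sit near frequency $-2$ and the mixed interactions $\tY^\epsilon\cdot\overline{\tY^\epsilon}$ sit near frequency $0$, so in both cases the output frequency is separated from the characteristic set $\tau=\pm\sqrt{|\xi|}$ — these are the non-resonant quadratic interactions. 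I would therefore introduce the quadratic normal form correction $(W^\epsilon_2, Q^\epsilon_2)$, i.e. the bilinear-in-$\tY^\epsilon$ terms prescribed by the normal form of \cite{HIT}, chosen precisely so that the quadratic residual of \eqref{ww2d1} is converted into cubic-and-higher terms. Setting $(W^\epsilon, Q^\epsilon) = (\tY^\epsilon + W^\epsilon_2 + \dots,\ \tY^\epsilon + Q^\epsilon_2 + \dots)$, the corrector $(W^\epsilon_2,Q^\epsilon_2)$ is of size $\epsilon^2$ in $L^\infty$ and, being frequency localized (near $0$ and $-2$ on the $\epsilon$-scale), of size $\epsilon^{3/2}$ in $\H$ and $\H^1$; this is what gives clause (i). One then has to check that the cubic terms generated — both the genuinely cubic terms of \eqref{ww2d1} at $(\tY^\epsilon,\tY^\epsilon)$ and the new cubic terms produced by feeding $W^\epsilon_2$ into the quadratic nonlinearity — either (a) are matched by the resonant cubic NLS nonlinearity $\lambda\tY^\epsilon|\tY^\epsilon|^2$ for the correct choice $\lambda = -\frac12$, or (b) are again non-resonant (self-interactions near frequency $-3$, etc.) and can be pushed into the $\epsilon^{7/2}$ error by a further cubic normal form correction, or (c) are quartic and already small enough. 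The choice $\lambda=-\frac12$ is forced here: it is exactly the coefficient of the resonant part of the cubic nonlinearity of the water wave system at frequency $-1$, as computed in \cite{HIT}.

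Finally I would collect the error. The residual $(g^\epsilon,k^\epsilon)$ consists of: the linear-dispersion mismatch ($O(\epsilon^{7/2})$ as above), the NLS error $g^\epsilon$ already controlled by \eqref{ge-bound} at $O(\epsilon^{s+7/2})$, the quartic and higher terms of \eqref{ww2d1} evaluated on the corrected ansatz (each factor costs $\epsilon$, and with four factors of size $\epsilon$ in $L^\infty$ localized near frequency $1$ one gains back $\epsilon^{-1/2}$ for the $L^2$ norm of the output, giving $O(\epsilon^{7/2})$), and the frequency-tail contributions from the truncation in $\tU$, which are $O(\epsilon^{s+\ldots})$. The $\H^N$ bound on $(g^\epsilon,k^\epsilon)$ then follows because every piece is frequency localized on the $O(1)$ scale, so Sobolev norms cost only universal constants. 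The main obstacle will be the bookkeeping of clause (b)–(c): one must verify that after a single quadratic normal form step and a single cubic normal form step, no resonant term of order $\epsilon^{7/2}$ or worse survives, which requires knowing the precise resonance structure of the quadratic and cubic interactions of \eqref{ww2d1} near frequency $-1$ — this is exactly where the ``further refinements'' of the \cite{HIT} normal form are needed, and where one must be careful that the normal form correctors do not lose derivatives (they don't, because the non-resonant interactions here are separated from the characteristic variety by an $O(1)$ amount, not merely transversally).
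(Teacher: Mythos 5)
Your overall architecture matches the paper's: build $(W^\epsilon,Q^\epsilon)$ by formally inverting the (quadratically and cubically corrected) normal form transformation applied to the NLS profile, identify $\lambda=-\frac12$ as the coefficient of the resonant cubic interaction, and absorb the remaining pieces into an $O(\epsilon^{7/2})$ error using the $O(1)$ separation from the characteristic variety. However, there is a genuine error at the very first step. You claim that with the leading ansatz $(W^\epsilon_0,Q^\epsilon_0)=(\tY^\epsilon,\tY^\epsilon)$ the linear residual reduces to $(i\partial_t+\omega_0(D))\tY^\epsilon$ up to an $O(\epsilon^{7/2})$ dispersive mismatch. That is false: the linearized system $w_t+q_\alpha=0$, $q_t-iw=0$ is not diagonal, and the pair $(\tY^\epsilon,\tY^\epsilon)$ is not purely on the $+$ branch. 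Concretely, the linear residual of the first equation is $(i\omega_0(D)+\partial_\alpha)\tY^\epsilon$, whose symbol $i\omega_0(\xi)+i\xi$ vanishes at $\xi=-1$ but has nonvanishing derivative $\tfrac{i}{2}$ there; equivalently, the minus-branch component $\tfrac12(1-|D|^{1/2})\tY^\epsilon$ is of size $\epsilon^{3/2}$ in $L^2$ and is driven off-characteristic. So the linear residual is $O(\epsilon^{3/2})$, not $O(\epsilon^{7/2})$ --- two powers of $\epsilon$ too large for clause (ii). The paper avoids this by taking $\tQ^\epsilon=|D|^{-1/2}(\tY^\epsilon+\cdots)$ in \eqref{YtoWQ}, so that the $W$--$Q$ relation agrees with the exact branch relation and the linear mismatch is genuinely $(\omega_0(D)-|D|^{1/2})\tY^\epsilon=O(\epsilon^{7/2})$; this modification is still consistent with clause (i) since $(|D|^{-1/2}-1)\tY^\epsilon=O_{L^2}(\epsilon^{3/2})$. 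Your scheme as written never corrects the linear part, so the gap is not repaired downstream.

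A secondary imprecision: you locate the non-resonant cubic terms at output frequency $-3$ (and, implicitly, $+1$), but the paper's construction also needs the corrector $Y^-=-\frac14\tY^\epsilon|\tY^\epsilon|^2$ for the cubic term that sits at frequency $-1$ yet drives the \emph{opposite temporal branch} $\tau=-\sqrt{|\xi|}$, together with a direct estimate (not a normal form) of the ``null'' cubic terms, which are small because $(D+1)\tY^\epsilon=O_{L^2}(\epsilon^{3/2})$. Your general criterion (separation from the characteristic variety) does cover the former, but you would need to identify it explicitly to carry out the bookkeeping you yourself flag as the main obstacle.
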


The third part of the proof is to show that the approximate solution
$(W^\epsilon,Q^\epsilon)$ can be replaced with an exact solution.

\begin{theorem}\label{t:exact}
  Let $T > 0$. Let $(W^\epsilon,Q^\epsilon)$ be the approximate
  solution for the water wave equation \eqref{ww2d1} given by the
  previous theorem. Then for $\epsilon$ small enough (depending only
  on $\|U_0\|_{H^3}$ and on $T$) the exact solution $(W,Q)$ with the
  same initial data exists in the time interval
  $\{|t| \leq T \epsilon^{-2}\}$, and satisfies the bounds
\begin{equation}
\| (W - W^\epsilon, Q-Q^\epsilon)\|_{\H} \lesssim \epsilon^\frac32,
\end{equation}
\begin{equation}
\| (\W - \W^\epsilon, R-R^\epsilon)\|_{\H^1} \lesssim \epsilon^\frac32.
\end{equation}
\end{theorem}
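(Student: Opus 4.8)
The plan is to compare the exact solution $(W,Q)$ with the approximate solution $(W^\epsilon,Q^\epsilon)$ by running the well-posedness and energy machinery of \cite{HIT} for the \emph{difference}, treating the approximate equation \eqref{ww2d1-eps} as a perturbation of \eqref{ww2d1} with a forcing term of size $\epsilon^{7/2}$ in $\H^N$. First I would set up a continuity (bootstrap) argument on the interval $\{|t|\le T\epsilon^{-2}\}$: assume the difference bounds
\[
\| (W - W^\epsilon, Q-Q^\epsilon)\|_{\H} + \| (\W - \W^\epsilon, R-R^\epsilon)\|_{\H^1} \le C\epsilon^{3/2}
\]
hold with a doubled constant $2C\epsilon^{3/2}$, and show they self-improve. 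The first point to record is that under this assumption the exact solution $(\W,R)$ satisfies uniform control norms ($A$, $B$ from \cite{HIT}, described in Section~\ref{s:energy}) of the same size as those of $(W^\epsilon,Q^\epsilon)$, i.e. $A\lesssim \epsilon$, $B\lesssim \epsilon^2$ --- here it is essential that the pointwise/frequency-localized bounds in Theorem~\ref{t:approx} give $\|\tY^\epsilon\|_{L^\infty}\lesssim\epsilon$, so that the $\H^1$ difference being $O(\epsilon^{3/2})$ is a genuine perturbation of the control norms. This is exactly the situation in which the cubic energy estimates of \cite{HIT} apply.

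The core of the argument is then a differential inequality. Let $(w,q):=(W-W^\epsilon,Q-Q^\epsilon)$. Subtracting \eqref{ww2d1-eps} from \eqref{ww2d1} and linearizing, one sees that $(w,q)$ (or rather its good-variable counterpart $(\W-\W^\epsilon,R-R^\epsilon)$) solves the linearized water wave equation around $(W,Q)$ — for which \cite{HIT} provides cubic-in-$\epsilon$ energy estimates in $\H$ and $\H^1$ — plus two inhomogeneous contributions: the forcing $(g^\epsilon,k^\epsilon)$ of size $\epsilon^{7/2}$ in $\H^N$, and the quadratic-and-higher remainder from the difference of the two nonlinearities evaluated at the exact versus approximate solutions, which is $O(\epsilon)\cdot\|(w,q)\|$-type (controlled by the common control norms) plus genuinely higher-order terms. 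The linearized cubic energy estimate then yields, schematically,
\[
\frac{d}{dt}\| (w,q)(t)\|_{\H}^2 \lesssim \epsilon^2 \| (w,q)(t)\|_{\H}^2 + \epsilon^{7/2}\| (w,q)(t)\|_{\H},
\]
and the same at the $\H^1$ level for the good variables. Integrating via Gronwall over $|t|\le T\epsilon^{-2}$, the $\epsilon^2\cdot\epsilon^{-2}=T$ exponent produces a constant $e^{CT}$ (harmless, since $T$ is fixed before $\epsilon_0$ is chosen), and the source term contributes $\epsilon^{7/2}\cdot\epsilon^{-2}=\epsilon^{3/2}$, giving $\|(w,q)\|_{\H}\lesssim_T \epsilon^{3/2}$ — closing the bootstrap provided $\epsilon$ is small enough and $C$ is chosen large relative to $e^{CT}$ times the implicit constants. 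Since the initial data agree, there is no data contribution. The local well-posedness Theorem~\ref{t:lwp} guarantees the exact solution exists as long as its $\H^1$ norm stays bounded, which the bootstrap provides, so the solution indeed extends to the full interval $\{|t|\le T\epsilon^{-2}\}$.

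The main obstacle I expect is \emph{not} the Gronwall bookkeeping but making the linearized/difference equation rigorous at the quasilinear level: one must verify that the difference of the two systems really is governed by the linearized equation of \cite{HIT} modulo terms that are either lower order or absorbed by the control norms, and in particular that the paralinearization/normal-form structure used to prove the cubic \emph{linearized} energy estimates in \cite{HIT} is compatible with the frequency-localized approximate solution $(W^\epsilon,Q^\epsilon)$ (whose low-frequency part is $O(\epsilon^{1/2})$ in $L^2$, not $O(\epsilon)$). Concretely, the coefficients $b$, $a$, $M$ and the auxiliary $Y=\W/(1+\W)$ for the exact and approximate solutions must be compared, and their differences estimated in terms of $\|(w,q)\|_{\H^1}$ with constants depending only on the control norms; the algebra of holomorphic functions makes these estimates manageable but they are the technical heart. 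A secondary point is that $\H^1$ control of the good variable must be transferred back to $\H$ control of $(W-W^\epsilon,Q-Q^\epsilon)$ via the relations $(\W,R)=(W_\alpha, Q_\alpha/(1+W_\alpha))$, using that $1+\W$ is bounded away from zero (again a consequence of the control norm bounds), and one low-frequency $L^2$ bound for $W-W^\epsilon$ and $Q-Q^\epsilon$ that is propagated directly from the first equation of the difference system rather than from its derivative. Once these comparisons are in place, the two displayed difference estimates follow simultaneously from the coupled Gronwall inequality.
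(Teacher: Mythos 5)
Your overall strategy---write an equation for the difference $(W-W^\epsilon,\,Q-Q^\epsilon)$, view it as the linearized equation plus forcing, and close a Gronwall/bootstrap over $|t|\le T\epsilon^{-2}$---is not the route the paper takes, and the place where you wave your hands is exactly where the paper has to do something different. The paper avoids the difference equation altogether: it introduces a one-parameter family of \emph{exact} solutions $(W(t,s),Q(t,s))$ launched from the approximate solution at each initial time $t=s$, so that $\partial_s(W,Q)$ solves the \emph{exact} linearized system \eqref{ww2d-lin} (no quadratic remainder at all), with the error $(g^\epsilon,k^\epsilon)$ entering only as initial data of size $\epsilon^{7/2}$ at $t=s$. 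Theorem~\ref{t:lin} then applies off the shelf, and integration in $s$ over a length-$T\epsilon^{-2}$ interval produces the $\epsilon^{3/2}$. Your direct approach instead requires a cubic energy estimate for the linearized equation \emph{with} the quasilinear quadratic remainders reinstated; this is not supplied by \cite{HIT}, and redoing the normal-form-corrected energy $\Elint$ in the presence of those remainders is precisely the ``considerably more technical'' work the parametrization by $s$ is designed to bypass. This is a real gap, not bookkeeping: you cannot simply cite the linearized estimate for an equation that the difference does not satisfy.

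The second, sharper gap is your claim that ``the same [estimate holds] at the $\H^1$ level for the good variables.'' Theorem~\ref{t:lin} is an $\H$-estimate only; there is no $\H^1$ cubic linearized estimate in the paper, and for good reason. The paper obtains the $\H^1$ difference bound by interpolating the low-frequency linearized bounds (in $L^2$ for $W$ and $H^{-1/2}$ for $R$) against the $H^N$ uniform energy bounds of size $\epsilon^{1/2}$ from Theorem~\ref{t:energy}---and this interpolation only yields $\epsilon^{3/2-\delta}$, explicitly flagged as insufficient at the end of Section~\ref{s:exact}. Recovering the sharp $\epsilon^{3/2}$ requires the entire separate argument of Section~6: conjugating by the normal form \eqref{nft3}, applying $(D+1)$, exploiting the phase-shift invariance of the resonant cubic terms, and running a fresh energy estimate for $(D+1)(\tW,\tQ)$. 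Your proposal has no substitute for this step, so at best it would prove the theorem with $\epsilon^{3/2}$ replaced by $\epsilon^{3/2-\delta}$ in the second display, and even that only after the difference-equation issue above is repaired. (A minor additional slip: for wave-packet data one has $B\sim\epsilon$, not $\epsilon^2$, since $R_\alpha\approx\partial_\alpha(iY^\epsilon)$ lives at frequency $\sim 1$ with amplitude $\epsilon$; the bootstrap still closes because $AB\,T_\epsilon\sim T$, but the exponent you quote is wrong.)
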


The rest of the paper is organized as follows. In the next section we
review the cubic energy bounds in \cite{HIT} for both the water wave
equation \eqref{ww2d1} and its linearization. In the following section
we perform the normal form analysis leading to the approximate
solution $(\tW,\tQ)$, and prove Theorem~\ref{t:approx}.  In 
Section~\ref{s:exact} we make the transition from the approximate solution to the
exact solution and prove Theorem~\ref{t:exact}. Finally, in the last section we
carry out the perturbative analysis leading to Theorem~\ref{t:NLS-approx+}.

\subsection*{Acknowledgments} The first author
was partially supported by a Clare Boothe Luce Professorship. The second  author was
partially supported by the NSF grant DMS-1800294 as well as by a Simons Investigator
grant from the Simons Foundation.

\section{The NLS truncation}
\label{s:U-err}
Here we prove Proposition~\ref{p:cut}. The uniform $H^k$ bounds for
the NLS problem are well known due to its the conservation laws, so all
we have to do is to prove the error estimates \eqref{nls-err}.
Precisely, we need to estimate the $L^2$ norm of the error
\[
\tilde{f} =  P_{< \epsilon^{-1}} (U |U|^2)  - P_{< \epsilon^{-1}}  U |P_{< \epsilon^{-1}}  U|^2.
\]

To get the estimate we decompose $U$ in frequencies greater or equal,
and frequencies much smaller than $\epsilon^{-1}$ as shown below
\[
U=P_{\geq \epsilon^{-1}}U +P_{\ll \epsilon^{-1}}U,
\]
and then insert this decomposition into the expression of
$\tilde{f}$. Note that the complex conjugate plays no role in the
analysis as it commutes with the spectral projections. The trilinear
expressions arising in $\tilde{f}$ where  all factors are  localized at frequencies
much less than $\epsilon^{-1}$ will cancel. We are left with the following types of terms: 

\begin{description}
\item[(i)] two of the factors
are localized at frequencies higher than $\epsilon^{-1}$, and the
third factor is localized at frequencies much less than
$\epsilon^{-1}$; we denote such terms by $\tilde{f}_{hhl}$. 
\item[(ii)] two
of the factors are localized at frequencies much smaller than
$\epsilon^{-1}$, and the third factor is localized at frequencies
greater than $\epsilon^{-1}$; we denote such terms by
$\tilde{f}_{llh}$; 
\item[(iii)] all three factors are localized at
frequencies higher than $\epsilon^{-1}$; we denote such terms by
$\tilde{f}_{hhh}$. 
\end{description}
We prove the $L^2$ bounds of $\tilde{f}$ by bounding each of the terms
in $f_{llh}$, $f_{hhl}$ and $f_{hhh}$, respectively. 

For completeness we chose one term of the form $f_{hhh}$, and show the $L^2$ bound in
this case. We harmlessly discard the $P_{< \epsilon}$ multipliers:
\[
\begin{aligned}
\Vert P_{\geq \epsilon^{-1}}U\cdot  P_{\geq \epsilon^{-1}}U\cdot  P_{\geq \epsilon^{-1}}\bar{U}  \Vert_{L^2}&\lesssim \Vert P_{\geq \epsilon^{-1}}U \Vert_{L^2}  \Vert P_{\geq \epsilon^{-1}}\bar{U}\Vert_{L^{\infty}} \Vert  P_{\geq \epsilon^{-1}}U\Vert_{L^{\infty}}\\
&\lesssim \epsilon^{s}\cdot \epsilon ^{s-\frac{1}{2}}\cdot \epsilon ^{s-\frac{1}{2}}=\epsilon^{3s-1},
\end{aligned}
\]
which leads to the following bound
\[
\Vert \tilde{f}_{hhh}\Vert_{L^2}\lesssim \epsilon^{3s-1}.
\]

In proving the above bound we have used the uniform bound
\eqref{nls-en} for $L^2$ norm, and Bernstein's inequality for the
remaining two $L^{\infty}$ norms.

We now consider a representative term from the ones that compose
$\tilde{f}_{hhl}$ and get the bound by using \eqref{nls-en} and
Bernstein's inequality, discarding again the $P_{< \epsilon}$ multipliers:
\[
\begin{aligned}
\Vert P_{\geq \epsilon^{-1}}U\cdot  P_{\geq \epsilon^{-1}}U\cdot  P_{\ll \epsilon^{-1}} \bar{U} \Vert_{L^2}&\lesssim
\Vert  P_{\geq \epsilon^{-1}}U\Vert_{L^2}  
\Vert  P_{\geq \epsilon^{-1}}U\Vert_{L^{\infty}} 
\Vert P_{\ll \epsilon^{-1}} \bar{U}\Vert_{L^{\infty}}\\
&\lesssim \epsilon^{s}\cdot \epsilon^{s-\frac{1}{2} }\cdot 1=\epsilon^{2s-\frac{1}{2}}.
\end{aligned}
\]

Lastly, we show the bound for $\tilde{f}_{llh}$.  Here we no longer discard the $P_{< \epsilon}$ multipliers,
in order to take advantage of a commutator structure arising from the difference in $\tilde f$.
Precisely, we need  bounds for following  commutator
\[
\left[ P_{<\epsilon^{-1}}\, , \, P_{\ll \epsilon^{-1}} U\cdot P_{\ll \epsilon^{-1}} U\right] P_{\geq \epsilon^{-1}} \bar{U}.
\]
The commutator structure  shifts a
derivative from the high-frequency function $P_{\geq \epsilon^{-1}} \bar{U}$ to the low-frequency
function $P_{\ll \epsilon^{-1}} U\cdot P_{\ll \epsilon^{-1}} U$, allowing us to obtain the following bound
\[
\Vert \left[ P_{<\epsilon^{-1}}\, , \, P_{\ll \epsilon^{-1}} U\cdot P_{\ll \epsilon^{-1}} U\right] P_{\geq \epsilon^{-1}} \bar{U}.
\Vert_{L^2}\lesssim \epsilon \| \partial_x(P_{\ll \epsilon^{-1}} U\cdot P_{\ll \epsilon^{-1}} U)\|_{L^\infty} \| P_{\geq \epsilon^{-1}} \bar{U} \|_{L^2}
\lesssim \epsilon^{s+1},
\]
which gives the desired bound \eqref{nls-err}.

\section{ Energy estimates}
\label{s:energy}

Here we begin with a brief review of the energy estimates in \cite{HIT}.  For completeness we recall that the nonlinear system \eqref{ww2d1} also admits a conserved energy (the Hamiltonian), which has the form
\begin{equation}\label{ww-energy}
\E(W,Q) = \int \frac12 |W|^2 + \frac1{2i} (Q \bar Q_\alpha - \bar Q Q_\alpha)
- \frac{1}{4} (\bar W^2 W_\alpha + W^2 \bar W_\alpha)\, d\alpha.
\end{equation}
 As suggested by the above energy, our main function spaces for the 
 differentiated water wave system \eqref{ww2d-diff} are the
spaces $\H^k$ endowed with the norm
\[
\| (\W,R) \|_{H^k}^2 := \sum_{n=0}^k
\| \partial^k_\alpha (\W,R)\|_{ L^2 \times \dot H^\frac12}^2,
\]
where $k \geq 1$. 

To describe the lifespan of the solutions we recall the control norms $A$ and $B$ which we
have introduced in \cite{HIT}:
\begin{equation}\label{A-def}
A := \|\W\|_{L^\infty}+ \||D|^\frac12 R\|_{L^\infty \cap B^{0,\infty}_{2}},
\end{equation}
respectively
\begin{equation}\label{B-def}
B :=\||D|^\frac12 \W\|_{BMO} + \| R_\alpha\|_{BMO},
\end{equation}
where $|D|$ represents the multiplier with symbol $|\xi|$.
Here $A$ is a scale invariant quantity, while $B$ corresponds to the
homogeneous $\dot \H^1$ norm of $(\W, R)$. We also note that  both 
$A$ and $B$  are controlled by the $\H^1$ norm of the solution. 

One of the main results in \cite{HIT} was a  cubic energy bound, 
which for convenience we recall below:

\begin{theorem}\label{t:energy} 
For any $n \geq 0$ there exists an energy functional $E^n$ 
which has the following properties as long as $A \ll 1$:

(i) Norm equivalence:
\begin{equation*}
E^n (\W,R)= (1+ O(A)) \| (\W,R)\|_{\H^n}^2
\end{equation*}

(ii) Cubic energy estimates for solutions to \eqref{ww2d-diff}:
\begin{equation*}
\frac{d}{dt} \Ent (\W,R)  \lesssim AB \| (\W,R)\|_{\H^n}^2
\end{equation*}
\end{theorem}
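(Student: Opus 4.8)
The plan is to prove Theorem~\ref{t:energy} by the standard normal-form/quasilinear energy method, but organized around the control norms $A,B$ rather than Sobolev norms, exactly as in \cite{HIT}. First I would construct the energy functional $E^n$ as a perturbation of the leading-order quadratic energy. The natural starting point is the linearized/quadratic energy suggested by the Hamiltonian \eqref{ww-energy} applied to $(\partial^n\W,\partial^n R)$, namely a quantity comparable to $\|\partial^n_\alpha \W\|_{L^2}^2 + \||D|^{1/2}\partial^n_\alpha R\|_{L^2}^2$; one has to be slightly careful because $R$ lives in $\dot H^{1/2}$, so the energy pairs $R$ against $|D| R$ (equivalently uses $-i(R\bar R_\alpha - \bar R R_\alpha)$-type expressions). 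To this leading term one adds cubic corrections $E^n_{cubic}$, quadratic in the top-order derivatives $(\partial^n\W,\partial^n R)$ and linear in a low-order coefficient ($b$, $a$, $Y$, or their derivatives), whose role is to cancel the non-perturbative quadratic-in-top-order contributions that appear when one differentiates the quadratic energy along the flow. The full energy is $E^n := E^n_{quad} + E^n_{cubic}$, and the norm equivalence (i) will follow because the cubic correction is bounded by $A\|(\W,R)\|_{\H^n}^2$ (each correction has one low-frequency factor estimated in $A$ or, for $b_\alpha$-type terms, absorbed using that $b_\alpha$ is a quadratic expression in $R$ controlled by $A$), hence a factor $1+O(A)$.

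The second and main step is the time-differentiation of $E^n$ along solutions of \eqref{ww2d-diff}. I would commute $\partial^n_\alpha$ through the system \eqref{ww2d-diff} to get equations for $(\W_n,R_n):=(\partial^n_\alpha\W,\partial^n_\alpha R)$ of the schematic form $\partial_t \W_n + b\,\partial_\alpha \W_n + \text{(diagonal coupling to }R_n) = \text{(good source)}$, $\partial_t R_n + b\,\partial_\alpha R_n + \dots = \text{(good source)}$, where the commutator $[\partial^n_\alpha, b\partial_\alpha]$ and the commutators with the other coefficients produce either (a) terms with at most $n$ derivatives on each of $\W,R$ and hence controllable, or (b) a single term with $n+1$ derivatives — but that borderline term is exactly the one designed to be cancelled either by the transport structure (integration by parts in $\alpha$, using that $b$ is real so $\int b_\alpha |\W_n|^2$ costs only $B$) or by the cubic correction $E^n_{cubic}$. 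Then $\frac{d}{dt}E^n_{quad}$ produces the transport error plus quadratic-in-top terms with a coefficient of the form $b_\alpha$, $a$, $M$, $Y_t$, etc.; $\frac{d}{dt}E^n_{cubic}$ is computed by Leibniz, its leading piece cancels those quadratic-in-top terms, and the remaining pieces are genuinely lower order. Every surviving term must then be estimated by $AB\|(\W,R)\|_{\H^n}^2$: the generic mechanism is that one factor carries the top $n$ (or $n+\tfrac12$) derivatives and sits in $L^2$, a second factor carries roughly one derivative and is placed in $BMO$/$L^\infty$ (this is where $B$ enters, via \eqref{B-def}), and any further factors are scale-invariant and placed in $L^\infty\cap B^{0,\infty}_2$ (this is where $A$ enters, via \eqref{A-def}). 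One repeatedly uses Coifman–Meyer type bounds, the fact that $P,\bar P$ are bounded on these spaces, and commutator estimates of the form $\|[P,f]g\|_{L^2}\lesssim \|f_\alpha\|_{BMO}\|g\|_{L^2}$ together with $\||D|^{1/2}$ versions, to handle the projections appearing in $F$, $b$, $a$, $M$.

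The genuine obstacle — and the reason this is a theorem rather than an exercise — is the borderline top-order interactions: when $\partial^n_\alpha$ lands entirely on the principal part, one naively gets terms with $n+1$ derivatives on $\W$ or $R$, which is a half-derivative (in the $\dot H^{1/2}$ scaling) or a full derivative too many and cannot be closed by any Sobolev embedding. The whole point of the construction is that for the \emph{symmetrized} energy these terms either combine into a perfect $\alpha$-derivative (so they integrate to something with the right number of derivatives, weighted by $b_\alpha$ or $a_\alpha$, i.e.\ by $B$), or are removed by the precise choice of cubic correction, after exploiting the algebraic identities relating $b$, $a$, $M$, $Y$ in \eqref{defb}–\eqref{M-def} (for instance the fact that $M$ and $b_\alpha$ differ by a manifestly lower-order holomorphic bilinear form). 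Getting this cancellation to work simultaneously for all $n$ forces one to track the normal-form structure carefully; once the cancellation is in place, the remaining estimates are routine multilinear/paradifferential bounds. I would therefore present the leading quadratic energy and its time derivative first, identify the single offending term, then introduce $E^n_{cubic}$ precisely to kill it, and finally collect the bookkeeping of the remaining terms into the stated $AB\|(\W,R)\|_{\H^n}^2$ bound, citing the control-norm estimates and paraproduct lemmas recalled earlier.
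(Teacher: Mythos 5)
This theorem is not proved in the present paper at all: it is recalled verbatim from \cite{HIT}, where it is established by the \emph{quasilinear modified energy method}, and the paper simply cites that reference. Your outline is a faithful description of that method --- a quadratic leading energy plus cubic corrections chosen to cancel the borderline top-order terms produced by time differentiation, with the surviving terms estimated by putting one factor in $L^2$ at top order, one roughly one-derivative factor in $BMO$ (the source of $B$), and the remaining scale-invariant factors in $L^\infty$ (the source of $A$) --- so at the level of strategy you and the source agree.

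Be aware, however, that what you have written is a proof plan rather than a proof. The substance of the result in \cite{HIT} lies in the explicit construction of $E^n$ (the precise cubic corrections involving $a$, $b$, $\W$ and $R$, analogous to the displayed functional $\Elint$ for the linearized problem) and in the verification that the advertised cancellation of the $(n+1)$-derivative terms actually occurs uniformly in $n$; neither is carried out here, and ``routine multilinear/paradifferential bookkeeping'' is absorbing essentially all of the technical content of the theorem. If you intend this as a self-contained proof you must supply those functionals and computations; if you intend it as a summary of \cite{HIT}, it is accurate.
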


This is a small data result. The cubic energy functional $E^n$ exists
at any level of regularity $n$ of the solutions $(\W,R)$. These energy
functionals are obtained in \cite{HIT} using the \emph{quasilinear
  modified energy method} as a quasilinear alternative to the normal
form analysis (for further details see \cite{HIT}).

In addition to the above energy estimates for the full system, another
key role in the present paper is played by the cubic estimates for the
linearization of the original system~\ref{ww2d1}.  To introduce these
linearized (exactly as in \cite{HIT}), we denote the linearized
variables by $(w,q)$. However, for the analysis it is more convenient
to work with the ``good variables'' $(w,r)$ where
\[
r = q- Rw.
\]
Expressed in terms of $(w,r)$, the linearized equations take the form
\begin{equation}\label{ww2d-lin}
\left\{
\begin{aligned}
& (\partial_t + b \partial_\alpha) w  +  \frac{1}{1+\bar \W} r_\alpha
+  \frac{R_{\alpha} }{1+\bar \W} w  = \mathcal{G}(w,r),
 \\
&(\partial_t + b \partial_\alpha)  r  - i  \frac{1+a}{1+\W} w  = \mathcal{K}(w,r),
\end{aligned}
\right.
\end{equation}
where the functions $b$ and $a$ are as in \eqref{defa} and \eqref{defb} and
\begin{equation*}
\begin{aligned}
\mathcal{G}(w,r) = \ (1+\W) (P \bar m + \bar P  m), \quad \mathcal{K}(w,r) =  \  \bar P n - P \bar n.
\end{aligned}
\end{equation*}
We also recall the definitions of $m$ and $n$: 
 \[ 
 m := \frac{r_\alpha +R_\alpha w}{J} + \frac{\bar R w_\alpha}{(1+\W)^2}, \qquad n := \frac{ \bar R(r_{\alpha}+R_\alpha w)}{1+\W}.
\]

In particular, we remark that the linearization of the system \eqref{ww2d-diff} around the zero solution is 
\begin{equation} \label{ww2d-0} \left\{
\begin{aligned}
 & w_{ t} +  r_\alpha   =  0,
\\
& r_t - i w = 0.
\end{aligned}
\right.
\end{equation}
This system is a well-posed linear evolution in the space $\H$ of holomorphic functions,
and a conserved energy for this system is
 \begin{equation}\label{E0}
 \Ez (w,r) = \int \frac12 |w|^2 + \frac{1}{2i} (r \bar r_\alpha - \bar r r_\alpha) d\alpha
\approx \| (w,r)\|_{\H}^2
 \end{equation}

% We now define our function spaces. The system \eqref{ww2d-0} is a well-posed linear evolution in the space $\dH_0$ of holomorphic functions endowed with the $L^2 \times \dot H^{\frac12}$ norm. 

We remark that while $(w,r)$ are holomorphic, it is not directly obvious
that the  evolution \eqref{ww2d-lin} preserves the space of holomorphic states. To
remedy this one can also project the linearized equations onto the
space of holomorphic functions via the projection $P$.  Then we obtain
the equations
\begin{equation}\label{lin(wr)}
\left\{
\begin{aligned}
& w_t + P  \left[ b \partial_\alpha w \right]  + P \left[ \frac{1}{1+\bar \W} r_\alpha\right]
+  P \left[ \frac{R_{\alpha} }{1+\bar \W} w \right] = P \mathcal{G}(  w, r),
 \\
& r_t + P \left[b \partial_\alpha   r\right]  - i P\left[ \frac{1+a}{1+\W} w\right]  =
 P \mathcal{K}( w,r).
\end{aligned}
\right.
\end{equation}
Since the original set of equations \eqref{ww2d1} is fully
holomorphic, it follows that the two sets of equations,
\eqref{ww2d-lin} and \eqref{lin(wr)}, are algebraically equivalent.

For this problem we add appropriate cubic terms to the linear energy functional 
$\Ez$ above and define the quasilinear modified energy
\[
\Elint(w,r) := \int_{\R} (1+a) |w|^2 + \Im (r \bar  r_\alpha)
+ 2 \Im (\bar R w r_\alpha) -2\Re(\bar{\W} w^2)\, d\alpha.
\]

Then we have the following cubic energy estimate, which shows that the system 
\eqref{ww2d-lin} is well-posed in $\H$:
\begin{theorem}
\label{t:lin}
Assume that $A \ll1$. Then the energy functional $\Elint(w,r)$ has the following properties
as long as $A \ll 1$:

(i) Norm equivalence:
\begin{equation}\label{elin3-eq}
 \Elint(w,r) = (1+O(A)) \|(w,r)\|_{\H}^2.
\end{equation}

(ii) The solutions to \eqref{ww2d-lin} satisfy
\begin{equation}\label{elin3-diff}
\begin{split}
  \left|\frac{d}{dt} \Elint(w,r) \right| \lesssim  AB  \|(w,r)\|_{\H}^2
\end{split}
\end{equation}
\end{theorem}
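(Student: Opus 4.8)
The plan is to prove Theorem~\ref{t:lin} by the standard quasilinear modified energy scheme, following the philosophy already used for $E^n$ in Theorem~\ref{t:energy}, but now at the level of the linearized equation \eqref{ww2d-lin}. The norm equivalence \eqref{elin3-eq} is the easy part: the quadratic term $\int (1+a)|w|^2 + \Im(r\bar r_\alpha)\, d\alpha$ is, up to the $a$ correction, exactly $\Ez(w,r)$, which by \eqref{E0} is $\approx \|(w,r)\|_\H^2$; and the cubic correction terms $2\Im(\bar R w r_\alpha)$ and $-2\Re(\bar\W w^2)$ are each bounded by $A \|(w,r)\|_\H^2$. Here one uses that $\|a\|_{L^\infty} \lesssim A^2 \lesssim A$, that $\||D|^{1/2}R\|_{L^\infty} \lesssim A$ controls $\bar R w r_\alpha$ after distributing the half-derivative (writing $\Im(\bar R w r_\alpha)$ symmetrically as a form that pairs $|D|^{1/2}r$ against $|D|^{1/2}r$ with an $R$ coefficient, plus a commutator), and similarly $\|\W\|_{L^\infty} \lesssim A$ controls $\bar\W w^2$ in $L^1$. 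So part (i) reduces to routine paradifferential bookkeeping.

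For part (ii) I would differentiate $\Elint$ in time and substitute the equations \eqref{ww2d-lin} for $\partial_t w$ and $\partial_t r$. The key structural point — the reason the cubic terms were chosen exactly as they are — is that the worst terms, those which are quadratic in $(w,r)$ and therefore a priori only bounded by $B\|(w,r)\|_\H^2$ rather than $AB\|(w,r)\|_\H^2$, must cancel. Concretely: $\frac{d}{dt}\int \Im(r\bar r_\alpha)$ produces, via the transport term $b\partial_\alpha r$ and the term $-i\frac{1+a}{1+\W}w$, quadratic contributions that are cancelled on one hand by the $b_\alpha$ coming from integrating the transport term by parts (this is where $M$ and the structure of $b$ enter), and on the other hand by the time derivative of the cubic correctors $2\Im(\bar R w r_\alpha) - 2\Re(\bar\W w^2)$ together with the $(1+a)|w|^2$ term, after using the equations $\W_t + b\W_\alpha + \dots$ and $R_t + bR_\alpha = i(\W-a)/(1+\W)$ from \eqref{ww2d-diff}. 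After all the quadratic cancellations, what survives is genuinely cubic, with at least one factor of $(\W, R)$-type (contributing a power of $A$ after one derivative is absorbed, or $B$ if the full derivative lands on it), and the remaining quadratic factor in $(w,r)$ is estimated in $\H$; this yields the $AB\|(w,r)\|_\H^2$ bound. The inhomogeneous source terms $\mathcal{G}(w,r), \mathcal{K}(w,r)$ are already cubic (each of $m, n$ is quadratic in $(w,r)$ with an $R$ or $\W$ coefficient carrying a derivative), so pairing them against $w$ or $r_\alpha$ in the energy immediately gives $AB\|(w,r)\|_\H^2$, modulo again distributing the half-derivative correctly.

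The main obstacle, and the part deserving real care rather than hand-waving, is the exact tracking of the quadratic cancellation: showing that all terms of the form $\int (\text{derivative of } b) \times (\text{quadratic in } w,r)$ and all terms of the form $\int (\text{derivative of } \W \text{ or } R) \times (\text{quadratic in } w,r)$ with the wrong homogeneity in fact cancel against the time derivative of the cubic correctors. This is the classical ``algebraic miracle'' of the modified energy method, and here it is forced by the Hamiltonian structure — indeed $\Elint$ can be recognized as (the quadratic-in-$(w,r)$ part of) the second variation of the water wave Hamiltonian \eqref{ww-energy} transported to the good variables, so the cancellation is guaranteed in principle; but carrying it out requires using the precise formulas \eqref{defb}, \eqref{defa}, \eqref{M-def} and the holomorphicity of all quantities (so that $P$-projections can be moved around freely and integrals of the form $\int P[\cdot]\overline{P[\cdot]}$ manipulated). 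A secondary technical point is the consistent handling of the half-derivative $|D|^{1/2}$ in the $\dot H^{1/2}$ component: every time one integrates by parts or commutes $|D|^{1/2}$ past a variable coefficient like $R$ or $1/(1+\bar\W)$, one generates a commutator which must be shown to be lower order, i.e., bounded by $A$ or $B$ times the energy, using the Coifman–Meyer / paraproduct estimates together with the definitions \eqref{A-def}, \eqref{B-def} of $A$ and $B$. Since \cite{HIT} establishes exactly these tools, I expect no genuinely new difficulty, only careful bookkeeping; the write-up would most naturally be organized by first listing the quadratic terms and exhibiting their cancellation, then collecting the cubic remainder and estimating it termwise.
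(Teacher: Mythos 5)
You should know at the outset that the paper contains no proof of Theorem~\ref{t:lin}: Section~\ref{s:energy} is explicitly a review, and this statement is imported from \cite{HIT}, where it is established by precisely the quasilinear modified energy method you describe. So your strategy is the right one and coincides with the source's. The problem is that what you have written is a plan rather than a proof: the decisive step --- exhibiting the cancellation of every term in $\frac{d}{dt}\Elint$ that is quadratic in $(w,r)$ but only \emph{linear} in the background $(\W,R)$ --- is motivated but never performed, and the Hamiltonian heuristic you invoke does not substitute for it. The linearization of a Hamiltonian flow around a non-stationary solution is Hamiltonian with a \emph{time-dependent} quadratic Hamiltonian, so nothing about conservation of a second variation is ``guaranteed in principle''; in \cite{HIT} the correctors $2\Im(\bar R w r_\alpha)-2\Re(\bar \W w^2)$ and the coefficient $a$ are produced by a normal form computation, and verifying \eqref{elin3-diff} is a genuine multi-page calculation.

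Beyond incompleteness there is one concrete error in your bookkeeping. You assert that $m$ and $n$ are ``quadratic in $(w,r)$'' and hence that the contributions of $\mathcal G,\mathcal K$ ``immediately give $AB\|(w,r)\|_{\H}^2$.'' In fact $m=\frac{r_\alpha+R_\alpha w}{J}+\frac{\bar R w_\alpha}{(1+\W)^2}$ and $n=\frac{\bar R(r_\alpha+R_\alpha w)}{1+\W}$ are \emph{linear} in $(w,r)$ with a single background coefficient. Consequently a term such as $\int \Im\bigl(P[R\,\bar r_\alpha]\,\bar r_\alpha\bigr)\,d\alpha$, arising when $\mathcal K$ is paired with $\bar r_\alpha$ in $\frac{d}{dt}\int\Im(r\bar r_\alpha)\,d\alpha$, carries only one factor of the background; direct estimation gives at best $B\|(w,r)\|_{\H}^2$, so these source-term contributions belong to exactly the class of ``worst terms'' whose cancellation you say must be tracked --- they are part of the cancellation, not exempt from it. A smaller caution for part (i): the bound $\bigl|\int \bar R\, w\, r_\alpha\,d\alpha\bigr|\lesssim A\|(w,r)\|_{\H}^2$ is not obtained by naively distributing $|D|^{1/2}$, since $\|\bar R\|_{L^\infty}\,\||D|^{1/2}w\|_{L^2}$ is not controlled by $A\|w\|_{L^2}$; one needs the Besov component $B^{0,\infty}_{2}$ built into \eqref{A-def} together with the frequency separation between the holomorphic factors $w,r_\alpha$ and the anti-holomorphic factor $\bar R$.
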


We remark that, due to the invariance of the system \eqref{ww2d1} with
respect to spatial translations, a particular solution $(w,r)$ for the
linearized system is given by $(\W,R)$.

\section{ Normal forms and the approximate solution}

As observed in \cite{HIT}, the quadratic terms in the water wave
equation can be eliminated via a quadratic normal form
transformation.  Here we will develop this a step further, eliminating also the 
cubic non-resonant terms with an additional cubic correction to the normal form
transformation. This will then be formally inverted, in order to produce 
an approximate water wave solution from the approximate cubic NLS
solution $\tY^\epsilon$.

We begin with the cubic expansion in the water wave system \eqref{ww2d1}. To compute that we have
\[
\left(\frac{1}J \right)^{(\leq 3)} = 1 - W_\alpha - \bar W_\alpha - |W_\alpha|^2  + (W_\alpha + \bar W_\alpha)^2 ;
\]
therefore
\[
\begin{split}
F^{(\leq 3)} =  & \  Q_\alpha - Q_\alpha W_\alpha - P [ Q_\alpha \bar W_\alpha - \bar Q_\alpha W_\alpha] + 
P [ (Q_\alpha - \bar Q_\alpha) (W_\alpha^2 + |W_\alpha|^2 + \bar W_\alpha^2)] 
\\
= & \  Q_\alpha - Q_\alpha W_\alpha - P [ Q_\alpha \bar W_\alpha - \bar Q_\alpha W_\alpha] + 
Q_\alpha W_\alpha^2 + 
P [ Q_\alpha  (|W_\alpha|^2 + \bar W_\alpha^2)] \\ & \  -  P [ \bar Q_\alpha(W_\alpha^2 + |W_\alpha|^2 )].
\end{split}
\]
Here and later, the superscript $(\leq 3)$ denotes terms up to cubic order in a formal power series 
expansion in $(W,Q)$. Using this in \eqref{ww2d1} we get
\begin{equation}
\label{ww2d1-re}
\left\{
\begin{aligned}
& W_t + Q_\alpha = G \\
& Q_t -i W    = K, \\
\end{aligned}
\right.
\end{equation}
where the quadratic and cubic terms in $G$ and $K$ are given by 
\[
\left\{
\begin{aligned}
G^{(\leq 3)} := & \    (1+W_\alpha) P [ Q_\alpha \bar W_\alpha - \bar Q_\alpha W_\alpha] 
- P [ Q_\alpha  (|W_\alpha|^2 + \bar W_\alpha^2)] +  P [ \bar Q_\alpha(W_\alpha^2 + |W_\alpha|^2 )]
\\
K^{\leq 3)}  := & \ - Q_\alpha^2 - P[|Q_\alpha|^2] + W_\alpha Q_\alpha^2 + Q_\alpha P [ Q_\alpha \bar W_\alpha - \bar Q_\alpha W_\alpha] + P[ |Q_\alpha|^2(W_\alpha + \bar W_\alpha)].
\end{aligned}
\right.
\]
As observed in \cite{HIT}, the quadratic terms in $G$ and $K$ can be eliminated 
using a normal form transformation
\begin{equation}
\tilde W = W - 2 P[\Re W  W_\alpha], \qquad \tilde Q = Q - 2 P[ \Re W  Q_\alpha]
\label{nft1}
\end{equation}
Of the cubic terms, the ones with exactly one complex conjugate are resonant, 
while the ones with either two conjugates or with none are always non-resonant.
For our purposes here, it is convenient to add a further cubic correction to 
our normal form transformation which eliminates these non-resonant terms.
This is given by

\begin{equation}
\left\{
\begin{aligned}
\tilde W = & \  W - 2 P[\Re W  W_\alpha] 
% +   \frac12  P[ W_\alpha P[ \bar W  W_\alpha]  + \bar P[  W  \bar W_\alpha]  ]
+ \frac12  \partial_\alpha ( W^2 W_\alpha) + \frac12 \partial_\alpha P[ \bar W^2 W_\alpha]   
\\
\tilde Q = & \ Q - 2 P[ \Re W  Q_\alpha]+ \frac12  \partial_\alpha ( W^2 Q_\alpha) 
+ \frac12 \partial_\alpha P[ \bar W^2 Q_\alpha]   .
\end{aligned}
\right.
\label{nft3}
\end{equation}
This will not be directly used here except in order to
gain intuition and to motivate our construction of the approximate
solution, which formally inverts this normal form transformation.
With this transformation, the equations for $(\tilde W,\tilde Q)$ become
\begin{equation}
\left\{
\begin{aligned}
&\tW_t + \tQ_\alpha = \tG,
\\
&\tQ_t - i \tW =  \tK,
\end{aligned}
\right.
\label{nft-ww2d1}
\end{equation}
where the quadratic term in $\tG$, $\tK$  vanishes and the cubic term has the form
\begin{equation}\label{tGK3}
\left\{
\begin{aligned}
\tG^{(3)}  =  &  \partial_\alpha \left[ W P[W_\alpha \bar Q_\alpha - 
Q_\alpha \bar W_\alpha]\right]
\\
\tK^{(3)} = & \   W P[[Q_\alpha|^2]_\alpha 
+ Q_\alpha P[ \bar Q_\alpha W_\alpha - \bar W_\alpha Q_\alpha] + \partial_{\alpha}P\left[ Q^2_{\alpha}\bar{W}\right].
\end{aligned}
\right.
\end{equation}
We can  further split the above expressions into resonant terms and null terms, where the latter 
vanish when applied to three waves which travel in the same direction.
We summarize the 
result in the following
\begin{lemma}
The normal form transformation \eqref{nft3} yields an equation of the form \eqref{nft-ww2d1}
where the cubic terms in $\tG$ and $\tK$ are given by

\begin{equation*}
%\label{gk-tilde4}
\left\{
\begin{aligned}
\tG^{(3)}_{r}=&   \ 0
\\
\tG^{(3)}_{null}=& \        \partial_\alpha \left[ W P[W_\alpha \bar Q_\alpha - 
Q_\alpha \bar W_\alpha]\right]
\\
\tK^{(3)} _{r}= &\ \partial_{\alpha}P\left[ Q^2_{\alpha}\bar{W}\right]                          \\
\tK^{(3)} _{null}=& \  W P[[Q_\alpha|^2]_\alpha 
+ Q_\alpha P[ \bar Q_\alpha W_\alpha - \bar W_\alpha Q_\alpha].
\end{aligned}
\right.
\end{equation*}
\end{lemma}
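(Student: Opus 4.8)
\noindent\textit{Proof plan.} The substantive content of the lemma is the splitting of the cubic source terms in \eqref{tGK3} into a resonant part and a null part; the expressions \eqref{tGK3} themselves have already been recorded above as the outcome of the normal form change of variables \eqref{nft3}. For completeness I would first recall briefly how \eqref{tGK3} arises, and then carry out the splitting.

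For the first point, one computes $\partial_t$ of the right-hand sides of \eqref{nft3}, substitutes the equations \eqref{ww2d1-re} for $W_t$ and $Q_t$, and re-expresses the outcome in terms of $(\tW,\tQ)$ to cubic order. The quadratic correction $-2P[\Re W\, W_\alpha]$ is the one from \cite{HIT}: it cancels the quadratic source terms and produces new cubic terms, arising on the one hand from commuting $\partial_t$ across this correction (using the linearized relations $W_t = -Q_\alpha$, $Q_t = iW$ up to quadratic errors) and on the other hand from feeding the quadratic part of \eqref{nft3} back into the quadratic parts of $G^{(\le 3)}, K^{(\le 3)}$. Collecting the resulting cubic terms and sorting them by the number of complex conjugates, those carrying zero or two conjugates are non-resonant, and the cubic correction in \eqref{nft3}, namely $\frac12\partial_\alpha(W^2 W_\alpha) + \frac12\partial_\alpha P[\bar W^2 W_\alpha]$ together with its $Q$-analogue, is precisely the one that cancels them. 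The surviving one-conjugate cubic terms are \eqref{tGK3}. This step is long but mechanical; the delicate points are the $[\partial_t,\cdot]$ commutators and keeping careful track of which factors the projection $P$ acts on.

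For the second point, recall that a multilinear expression in the holomorphic variables $(W,Q)$, their conjugates and derivatives is declared \emph{resonant} if its symbol does not vanish on the frequency configuration in which all inputs are co-propagating wave packets concentrated at $\xi_0=-1$ on the ``$+$'' branch, and \emph{null} otherwise; operationally, nullity is checked by substituting the leading-order profile of such a packet, $W = Q$ and $\partial_\alpha \mapsto -i$, and verifying that the expression vanishes. With this, the classification is immediate from \eqref{tGK3}. In $\tG^{(3)} = \partial_\alpha[W\,P[W_\alpha\bar Q_\alpha - Q_\alpha\bar W_\alpha]]$ the inner bracket, being of the form $f_\alpha\bar g_\alpha - g_\alpha \bar f_\alpha$ with $f=W$, $g=Q$, vanishes identically once $W\equiv Q$, so $\tG^{(3)}$ is entirely null and $\tG^{(3)}_{r}=0$. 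In $\tK^{(3)}$ the term $Q_\alpha P[\bar Q_\alpha W_\alpha - \bar W_\alpha Q_\alpha]$ contains the same antisymmetric bracket and is null; the term $W P[|Q_\alpha|^2]_\alpha$ is null because the outer derivative hits $P[|Q_\alpha|^2]$, whose symbol is supported at the frequency difference of its two conjugate-paired factors and hence vanishes on the diagonal; and the term $\partial_\alpha P[Q_\alpha^2 \bar W]$ is resonant, since after the substitution it reduces to a nonzero multiple of $W|W|^2$ at frequency $\xi_0$, which is exactly the term that will generate the cubic NLS nonlinearity. Hence $\tK^{(3)}_{r} = \partial_\alpha P[Q_\alpha^2\bar W]$ and $\tK^{(3)}_{null}$ is the sum of the other two terms.

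The main obstacle is the bookkeeping in the first step: organizing the cubic normal form substitution so that each contribution is counted exactly once and with the correct coefficient, and confirming that the cubic part of \eqref{nft3} indeed annihilates all the zero- and two-conjugate terms. Once the definition of the null structure is in place, the second step is a short symbol inspection.
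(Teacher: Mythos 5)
Your proposal is correct and follows essentially the route the paper takes: the expressions \eqref{tGK3} are obtained by the (mechanical) cubic normal form computation, and the lemma's actual content is the resonant/null sorting, which you carry out by exactly the symbol inspection the paper uses (the antisymmetric brackets vanish for $W\equiv Q$, the outer derivative on $P[|Q_\alpha|^2]$ vanishes on co-propagating packets, and $\partial_\alpha P[Q_\alpha^2\bar W]$ survives as the NLS-generating resonance). Your term-by-term conclusions agree with the lemma, so nothing further is needed.
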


 Keeping only the 
cubic resonant terms, the equations for $(\tilde W,\tilde Q)$ can be formally written as
\begin{equation}
\left\{
\begin{aligned}
&\tW_t + \tQ_\alpha \approx 0,
\\
&\tQ_t - i \tW \approx  \partial_{\alpha}P\left[ Q^2_{\alpha}\bar{W}\right],
\end{aligned}
\right.
\label{nft1res}
\end{equation}
or as a linearly diagonal system
for the variables 
\[
Y^+ = \frac12(\tW  + |D|^\frac12 \tQ), \qquad Y^- = \frac12(\tW  - |D|^\frac12 \tQ)
\]
\begin{equation}
\left\{
\begin{aligned}
&(i \partial_t + |D|^\frac12) Y^+ \approx \frac{i}{2} \partial_\alpha \vert D\vert^{\frac{1}{2}}P[ \tQ^2_\alpha  \bar{ \tW}],
\\
&(i \partial_t - |D|^\frac12)  Y^- \approx -\frac{i}2 \partial_\alpha \vert D\vert^{\frac{1}{2}} P[ \tQ^2_\alpha \bar{ \tW}].
\end{aligned}
\right.
\label{nft1res-y}
\end{equation}

Now we further specialize to solutions which are localized near
frequency $\xi_0 = -1$ and along the branch $\tau =
\sqrt{|\xi|}$. Being localized along $\tau = \sqrt{|\xi|}$ corresponds
to discarding the second equation in \eqref{nft1res-y} and setting
$Y^-=0$ in the right hand side of the first equation in
\eqref{nft1res-y}, and then being localized near frequency $\xi_0 =
-1$ is akin to setting $Q = W$ and $\partial_\alpha = - i$, $|D|= 1$
in the right hand side of the first equation in \eqref{nft1res-y}.

Explicitly, we express $\tW$ and $\vert D\vert^{1/2}\tQ$ in terms of $Y^+$ and $Y^-$:
\[
\tW =Y^+ +Y^-, \qquad  \vert D\vert^{\frac{1}{2}}\tQ=Y^+-Y^-,
\]
substitute them in the right hand side of the first equation in \eqref{nft1res-y}, and then set $Y^- =0$ to arrive (still formally) at an approximate equation for $Y^+$,
namely
\[
(i \partial_t + |D|^\frac12) Y^+ \approx - \frac{1}{2}  Y^+ |Y^+|^2.
\]
This will motivate the choice $\lambda = -\frac12$ in \eqref{NLS}.
The above heuristics show that $Y^+$ is an approximate solution for
the cubic NLS equation \eqref{NLS-y}.  

Now we use the above heuristics in order to construct the approximate solution
$(W^\epsilon,Q^\epsilon)$.  We start with an approximate solution
$\tY^\epsilon$ for the cubic NLS problem, and we seek to recover an
approximate solution $(\tW,\tQ)$ for the water wave equation. We start
from the choice suggested by the above heuristics, namely
\begin{equation}\label{chooseY+}
Y^+ =  \tY^\epsilon.
\end{equation}

One might now consider setting $Y^- = 0$, but this would correspond to
neglecting the cubic $Y^+$ part in the right hand side in the $Y^-$
equation \eqref{nft1res-y}.  Instead, we note that if we harmlessly neglect the $Y^-$
part of the right hand of the $Y^-$ equation then we are left with a
cubic expression in $Y^+$ which is non-resonant (i.e., this expression will not  cancel on the branch $\tau =-\sqrt{\xi}$).  Then we compute the
normal form correction
\begin{equation}\label{chooseY-}
Y^- =  -\frac{1}{4} \tY^\epsilon |\tY^\epsilon|^2 .
\end{equation}

This formally leads us to the choice of $\tW^\epsilon$ and $\tQ^\epsilon$ which solve the normal
form water wave equation \eqref{nft1res}   with quartic accuracy,
\begin{equation}\label{YtoWQ}
\tW^\epsilon =  \tY^\epsilon -  \frac{1}{4} \tY^\epsilon |\tY^\epsilon|^2  \qquad \tQ^\epsilon = |D|^{-\frac12} (  \tY^\epsilon +  \frac{1}{4} \tY^\epsilon |\tY^\epsilon|^2).
\end{equation}
Next we formally invert the normal form transformation \eqref{nft3}
to cubic order, setting
\begin{equation}\label{nft1+a}
\left\{
\begin{aligned}
W^\epsilon = & \  \tW^\epsilon + 2 P[\Re \tW^\epsilon  \tW^\epsilon_\alpha]  + 4 P[\Re  P[\Re \tW^\epsilon  \tW^\epsilon_\alpha]  \tW^\epsilon_\alpha]  
+ 4 P[\Re \tW^\epsilon P[\Re \tW^\epsilon  \tW^\epsilon_\alpha]_\alpha]
\\ & \ 
- \frac12  \partial_\alpha ( (\tW^\epsilon)^2 \tW^{\epsilon}_\alpha) - \frac12 \partial_\alpha P[ (\bar \tW^\epsilon)^2 \tW^\epsilon_\alpha] 
\\
 Q^\epsilon = & \ \tQ^\epsilon + 2 P[ \Re \tW^\epsilon \tQ^\epsilon_\alpha] + 4 P[\Re  P[\Re \tW^\epsilon  \tW^\epsilon_\alpha]  \tQ^\epsilon_\alpha]  
+ 4 P[\Re \tW^\epsilon P[\Re \tW^\epsilon  \tQ^\epsilon_\alpha]_\alpha]
\\ & \   -\frac12  \partial_\alpha ( (\tW^\epsilon)^2 \tQ^{\epsilon}_\alpha) 
- \frac12 \partial_\alpha P[ (\bar \tW^\epsilon)^2 \tQ^{\epsilon}_\alpha]  .
\end{aligned}
\right.
\end{equation}
This will be our candidate for an approximate solution to \eqref{ww2d1}.

Proceeding in two steps, we first  show that $(\tW^\epsilon,\tQ^\epsilon)$ is a good approximate 
solution to the cubic system
\begin{equation}
\left\{
\begin{aligned}
&\tW^\epsilon_t + \tQ^\epsilon_\alpha = \tG^{(3)}(\tW^\epsilon,\tQ^\epsilon) + \tg^{\epsilon}
\\
&\tQ^\epsilon_t - i \tW^\epsilon =  \tK^{(3)}(\tW^\epsilon,\tQ^\epsilon) + \tk^{\epsilon},
\end{aligned}
\right.
\label{nft2-eps}
\end{equation}
where $\tG^{(3)}$ and $\tK^{(3)}$ are given by \eqref{tGK3}.

\begin{lemma}
Let $(\tW^\epsilon,\tQ^\epsilon)$ be defined by \eqref{YtoWQ}.
Then they have the following properties:
\begin{description}
\item[(i)] Frequency localization at frequency $\xi \in \left(-2, 0\right)$.

\item[(ii)] Close to $\tY^\epsilon$, 
\begin{equation}
\| \tW^\epsilon - \tY^\epsilon\|_{L^2} + \| \tQ^\epsilon - \tY^\epsilon\|_{L^2} \lesssim \epsilon^{\frac32}.
\end{equation}

\item[(iii)] Approximate solution,
\begin{equation}\label{tgk}
\| \tg^{\epsilon}\|_{L^2} +  \| \tk^{\epsilon}\|_{L^2} \lesssim \epsilon^{\frac72}.
\end{equation}

\end{description}
\end{lemma}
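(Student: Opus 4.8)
The strategy is a direct verification: substitute the explicit ansatz \eqref{YtoWQ} into the cubic system \eqref{nft2-eps}, collect the resulting terms by homogeneity in $\tY^\epsilon$, and check that everything up to and including cubic order either cancels against $\tG^{(3)},\tK^{(3)}$ or is absorbed into the errors $\tg^\epsilon,\tk^\epsilon$. Throughout we use only two analytic facts about $\tY^\epsilon$: it is frequency localized in a small neighbourhood of $\xi_0 = -1$ (so $|D|^{\pm\frac12}$ and $\partial_\alpha$ act essentially like bounded multipliers, and all Sobolev norms are comparable up to $\epsilon$-powers), and it is an approximate NLS solution in the sense of \eqref{eqn-ye}--\eqref{ge-bound}, with $\lambda = -\frac12$. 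The $L^2$ size of $\tY^\epsilon$ is $O(\epsilon^{1/2})$ by \eqref{ye-bound} after undoing the scaling, with a gain of $\epsilon$ for each extra power of $\tY^\epsilon$ beyond the first (from the $L^\infty$ bound $\|\tY^\epsilon\|_{L^\infty}\lesssim \epsilon$), so a cubic expression is $O(\epsilon^{5/2})$ and a quintic one is $O(\epsilon^{9/2})$; the target $\epsilon^{7/2}$ for the errors corresponds exactly to a quartic expression.

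For \textbf{(i)}, frequency localization: $\tY^\epsilon$ is supported in $(-1-c\epsilon, -1+c\epsilon)$, so $|\tY^\epsilon|^2 = \tY^\epsilon \overline{\tY^\epsilon}$ is supported near $0$, hence $\tY^\epsilon|\tY^\epsilon|^2$ is supported near $-1$ as well; adding these and applying $|D|^{-1/2}$ (which is harmless away from frequency $0$) keeps $\tW^\epsilon, \tQ^\epsilon$ supported in $(-2,0)$ for $\epsilon$ small. For \textbf{(ii)}: $\tW^\epsilon - \tY^\epsilon = -\frac14 \tY^\epsilon|\tY^\epsilon|^2$, which is cubic hence $O(\epsilon^{5/2})$ in $L^2$, even better than the claimed $\epsilon^{3/2}$; similarly $\tQ^\epsilon - \tY^\epsilon = |D|^{-1/2}(\tY^\epsilon + \frac14 \tY^\epsilon|\tY^\epsilon|^2) - \tY^\epsilon = (|D|^{-1/2}-1)\tY^\epsilon + \frac14 |D|^{-1/2}(\tY^\epsilon|\tY^\epsilon|^2)$, where the first term is $O(\epsilon^{3/2})$ because $|D|^{-1/2}-1$ vanishes to first order at $\xi=-1$ and $\tY^\epsilon$ is localized within $\epsilon$ of that frequency (this is where the one extra $\epsilon$-gain comes from, matching \eqref{ye-bound-diff} in spirit), and the second term is $O(\epsilon^{5/2})$.

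For \textbf{(iii)}, the heart of the matter: write $\tg^\epsilon := \tW^\epsilon_t + \tQ^\epsilon_\alpha - \tG^{(3)}(\tW^\epsilon,\tQ^\epsilon)$ and $\tk^\epsilon := \tQ^\epsilon_t - i\tW^\epsilon - \tK^{(3)}(\tW^\epsilon,\tQ^\epsilon)$ and expand. At linear order, $\tW^\epsilon_t + \tQ^\epsilon_\alpha$ and $\tQ^\epsilon_t - i\tW^\epsilon$ reduce to the linear diagonal equations \eqref{nft1res-y} for $Y^\pm$; with $Y^+ = \tY^\epsilon$, $Y^- = -\frac14\tY^\epsilon|\tY^\epsilon|^2$, the $Y^+$ linear part produces $(i\partial_t + |D|^{1/2})\tY^\epsilon$, which by the NLS equation \eqref{eqn-ye} equals $-\frac12\tY^\epsilon|\tY^\epsilon|^2$ plus the genuinely small error (recall $\omega_0(D)$ and $|D|^{1/2}$ agree to $O(\epsilon^2)$ on the relevant frequencies, an error well below $\epsilon^{7/2}$ once multiplied by $\|\tY^\epsilon\|_{L^2}$, and $g^\epsilon$ is $O(\epsilon^{s+7/2})$ with $s\ge 1$). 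This cubic term must be matched by $\tG^{(3)}, \tK^{(3)}$ evaluated on the leading term $\tY^\epsilon$: by the Lemma preceding this one, $\tG^{(3)}$ is purely null and $\tK^{(3)}$ splits into a resonant piece $\partial_\alpha P[\tQ_\alpha^2 \bar\tW]$ and a null piece. Since $\tY^\epsilon$ is a single right-moving wave, \emph{all null terms vanish} when evaluated on it up to errors that are quartic or higher (the null structure gives an extra $\epsilon$, sometimes more, because the symbol vanishes on the diagonal and $\tY^\epsilon$ is $\epsilon$-close to the diagonal), so only the resonant term survives; substituting $\tW = \tY^\epsilon$, $\tQ = \tY^\epsilon$, $\partial_\alpha \rightsquigarrow -i$, $|D| \rightsquigarrow 1$ turns $\partial_\alpha P[\tQ_\alpha^2\bar\tW]$ into $\approx -i\cdot(-i)^2 \tY^\epsilon|\tY^\epsilon|^2 = i\tY^\epsilon|\tY^\epsilon|^2$, and carrying this through the $|D|^{1/2}$ in the $Y^+$ equation of \eqref{nft1res-y} reproduces precisely the $-\frac12\tY^\epsilon|\tY^\epsilon|^2$ from NLS. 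Finally, the $Y^- = -\frac14\tY^\epsilon|\tY^\epsilon|^2$ correction is designed so that the cubic part of its linear evolution $(i\partial_t - |D|^{1/2})Y^-$ cancels the non-resonant (two-conjugate / zero-conjugate) cubic terms produced when $\tW^\epsilon,\tQ^\epsilon$ are plugged into the quadratic terms of the original system — but in this Lemma those have already been removed by the normal form, so here $Y^-$ simply contributes at quartic order and above. All remaining discrepancies — the $O(\epsilon^2)$ dispersion mismatch times $\tY^\epsilon$, the quartic-and-higher pieces of $\tG^{(3)},\tK^{(3)}$ on the full $\tW^\epsilon,\tQ^\epsilon$, the quartic contributions of the $Y^-$ correction, the commutator/localization errors in the null-term cancellations, and the $|D|^{-1/2}$ corrections — are each $O(\epsilon^{7/2})$ in $L^2$, with room to spare. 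I expect the \textbf{main obstacle} to be purely bookkeeping: organizing the expansion so that the resonant cubic term is seen to match the NLS nonlinearity with the correct constant $-\frac12$ (getting the signs and the $(-i)^2$ right), and verifying cleanly that every null term really does gain the needed power of $\epsilon$ from the frequency localization of $\tY^\epsilon$ rather than merely one power when two are required.
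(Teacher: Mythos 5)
Your overall strategy is the same as the paper's: substitute the ansatz, use the (approximate) NLS equation to evaluate $\partial_t\tY^\epsilon$, check that the cubic resonant term $\partial_\alpha P[\tQ_\alpha^2\bar{\tW}]$ reproduces the NLS nonlinearity with $\lambda=-\tfrac12$, gain a power of $\epsilon$ on the null terms from the frequency localization, and dump everything quartic and higher into the error. Parts (i) and (ii) match the paper's argument exactly, including the observation that $(|D|^{-1/2}-1)\tY^\epsilon$ is the term responsible for the $\epsilon^{3/2}$ (rather than $\epsilon^{5/2}$) size in (ii).

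There is, however, one concrete quantitative error in your treatment of (iii), and it sits at precisely the tight point of the lemma. You assert that $\omega_0(D)$ and $|D|^{1/2}$ ``agree to $O(\epsilon^2)$ on the relevant frequencies, an error well below $\epsilon^{7/2}$ once multiplied by $\|\tY^\epsilon\|_{L^2}$.'' With an $O(\epsilon^2)$ symbol mismatch and $\|\tY^\epsilon\|_{L^2}\lesssim\epsilon^{1/2}$ you would only get $\|(\omega_0(D)-|D|^{1/2})\tY^\epsilon\|_{L^2}\lesssim\epsilon^{5/2}$, which does \emph{not} meet the target $\epsilon^{7/2}$ and would sink the proof. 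The correct statement — and the reason $\omega_0$ was taken to be the full \emph{quadratic} Taylor polynomial of $\sqrt{|\xi|}$ at $\xi=-1$ rather than the linear one — is that the symbols agree to third order, $|\omega_0(\xi)-|\xi|^{1/2}|\lesssim|\xi+1|^3$, so that
\[
\|(\omega_0(D)-|D|^{1/2})\tY^\epsilon\|_{L^2}\lesssim\|(D+1)^3\tY^\epsilon\|_{L^2}\lesssim\epsilon^3\cdot\epsilon^{1/2}=\epsilon^{7/2},
\]
which is exactly at threshold, with no room to spare (contrary to your closing claim). This is the one place where the choice of quadratic dispersive approximation is genuinely used, and your write-up as it stands gets the order wrong; once corrected, the rest of your accounting (the null terms gaining an $\epsilon$ from the antisymmetric/low-frequency structure, the quartic and quintic remainders, the $g^\epsilon$ contribution) goes through as in the paper.
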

\begin{proof}
Part (i) is trivial. For part (ii) we note that by Sobolev embeddings we have 
\begin{equation}\label{Y-bd}
\| \tY^\epsilon\|_{L^2} \lesssim \epsilon^\frac12, \qquad \| \tY^\epsilon\|_{L^\infty} \lesssim \epsilon,
\end{equation}
which suffices for the cubic terms. For the linear term on the other hand we use 
\begin{equation}\label{Y-at-i}
\| (D+1) Y_\epsilon \|_{L^2}  \lesssim \epsilon^\frac32.
\end{equation}

We now turn our attention to part (iii).  By the bounds in \eqref{Y-bd} we can freely replace 
$\tW^\epsilon$ and $\tQ^\epsilon$ by $\tY^\epsilon$ in $\tG^{(3)}$ and $\tK^{(3)}$.
Using both  \eqref{Y-bd} and \eqref{Y-at-i} we can also estimate directly the null parts
\[
\| \tG^{(3))}_{null}\|_{L^2} +  \| \tK_{null}^{(3)}\|_{L^2} \lesssim \epsilon^{\frac72}.
\]

Next we consider the time derivatives of $\tW^\epsilon$ and $\tQ^\epsilon$.
For $\tW^\epsilon$ we have 
\[
\partial_t \tW^\epsilon = \partial_t \tY^\epsilon - \frac14( 2  \partial_t \tY^\epsilon  |\tY^\epsilon|^2 +
 \partial_t \bar{\tY}^\epsilon  (\tY^\epsilon)^2) .
\]
For $\tY^{\epsilon}$ we use \eqref{eqn-ye}, which gives
\[
\partial_t   \tY^\epsilon= i \omega_0(D) )\tY^\epsilon 
+\frac{i}{2} \tY^\epsilon |\tY^\epsilon|^2-ig^\epsilon.
\]
The $g^{\epsilon}$ is acceptable by \eqref{ge-bound}.
Using again \eqref{Y-bd} we can discard the quintic terms arising from $\partial_t \tY^\epsilon$,
and using also \eqref{Y-at-i} we can replace $i \omega_0(D)  \tY^\epsilon$ by $i  \tY^\epsilon$ 
in the cubic terms, to obtain
\[
\partial_t \tW^\epsilon = i \omega_0(D)  \tY^\epsilon 
+\frac{i}4   \tY^\epsilon  |\tY^\epsilon|^2 + 
O_{L^2}(\epsilon^{\frac72}).
\]
Similarly we have 
\[
\partial_t \tQ^\epsilon = i |D|^{-\frac12} \omega_0(D)  \tY^\epsilon + \frac{3i}4   \tY^\epsilon  |\tY^\epsilon|^2 + 
O_{L^2}(\epsilon^{\frac72}),
\]
as well as 
\[
\partial_\alpha  \tQ^\epsilon  = - i |D|^{\frac{1}{2}}  \tY^\epsilon - \frac{i}4   \tY^\epsilon  |\tY^\epsilon|^2 + 
O_{L^2}(\epsilon^{\frac72}).
\]

Thus for the first equation in \eqref{nft2-eps} we compute
\[
\begin{split}
e_1 := & \ \partial_t \tW^\epsilon + \partial_\alpha  \tQ^\epsilon - \tG_r^{(3)}(\tY^\epsilon,\tY^\epsilon) 
\\
= & \  i (\omega_0(D) - |D|^\frac12)  \tY^\epsilon+ 
O_{L^2}(\epsilon^{\frac72}).
\end{split}
 \]
Since $\omega_0(D)$ agrees to cubic order  with $|D|^\frac12$ at $\xi = -1$, 
by \eqref{ye-bound} we have
\[
\|  (\omega_0(D) - |D|^\frac12)  \tY^\epsilon\|_{L^2} \lesssim \|(D+1)^3  \tY^\epsilon\|_{L^2} 
\lesssim \epsilon^{\frac72}
\]
as needed. The computation for the second equation  in \eqref{nft2-eps} is similar.
\end{proof}

The second step is to make the transition from $(\tW^\epsilon,\tQ^\epsilon)$ to 
$(W^\epsilon,Q^\epsilon)$, which will solve an equation of the form 
\begin{equation}
\label{ww2d1-eps+}
\left\{
\begin{aligned}
& W^\epsilon_t + Q^\epsilon_\alpha = G^{\leq 3}(W^\epsilon, Q^\epsilon) + g^\epsilon  \\
& Q_t -i W    = K^{\leq 3}(W^\epsilon, Q^\epsilon) + k^\epsilon . \\
\end{aligned}
\right.
\end{equation}
Precisely, we have:

\begin{lemma}
Let $(W^\epsilon,Q^\epsilon)$ be defined by \eqref{nft1+a}. 
Then they have the following properties:
\begin{description}
\item[(i)] Frequency localization at frequency $\xi \in (-6, 0)$.

\item[(ii)] Close to $\tY^\epsilon$, 
\begin{equation}
\| W^\epsilon - \tY^\epsilon\|_{L^2} + \| Q^\epsilon - \tY^\epsilon\|_{L^2} \lesssim \epsilon^{\frac32}.
\end{equation}

\item[(iii)] Approximate solution,
\begin{equation}
\| g^\epsilon\|_{L^2} +  \| k^{\epsilon}\|_{L^2} \lesssim \epsilon^{\frac72}.
\end{equation}

\end{description}
\end{lemma}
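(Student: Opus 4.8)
The plan is to verify the three claims by comparing $(W^\epsilon, Q^\epsilon)$, defined via the formal inversion \eqref{nft1+a}, against the already-understood pair $(\tW^\epsilon,\tQ^\epsilon)$ from the previous lemma, and then against $\tY^\epsilon$. Part (i) is again immediate: each term in \eqref{nft1+a} is a product (with at most $P$-projections and $\partial_\alpha$'s) of at most three factors each frequency-localized in $(-2,0)$ by part (i) of the previous lemma, so the output lands in $(-6,0)$. For part (ii), I would write $W^\epsilon - \tY^\epsilon = (W^\epsilon - \tW^\epsilon) + (\tW^\epsilon - \tY^\epsilon)$; the second difference is $O_{L^2}(\epsilon^{3/2})$ by the previous lemma, and the first difference is a sum of quadratic, cubic and quartic expressions in $\tW^\epsilon$ (or $\tW^\epsilon$ and $\tQ^\epsilon$). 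Using the frequency localization to trade $\partial_\alpha$ for $O(1)$, together with $\|\tW^\epsilon\|_{L^2}\lesssim \epsilon^{1/2}$, $\|\tW^\epsilon\|_{L^\infty}\lesssim\epsilon$ (which follow from \eqref{Y-bd} and part (ii) of the previous lemma), the quadratic terms are $O_{L^2}(\epsilon^{1/2}\cdot\epsilon) = O_{L^2}(\epsilon^{3/2})$ and the higher-order terms are even smaller. The same bookkeeping applies to $Q^\epsilon - \tY^\epsilon$, using $\|\tQ^\epsilon\|_{L^2}\lesssim\epsilon^{1/2}$, $\|\tQ^\epsilon\|_{L^\infty}\lesssim\epsilon$ (again from \eqref{Y-bd} and the previous lemma, since $|D|^{-1/2}$ is harmless on functions localized away from zero frequency).

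The substance is in part (iii). Here I would argue that \eqref{nft1+a} is precisely the cubic-order inverse of the normal form transformation \eqref{nft3}, so that substituting $(W^\epsilon,Q^\epsilon)$ into the original water wave system \eqref{ww2d1-re} and applying \eqref{nft3} returns, up to errors, the normal-form system \eqref{nft2-eps} for $(\tW^\epsilon,\tQ^\epsilon)$. Concretely: let $(g^\epsilon,k^\epsilon)$ be defined by \eqref{ww2d1-eps+}, i.e. $g^\epsilon = W^\epsilon_t + Q^\epsilon_\alpha - G^{\leq 3}(W^\epsilon,Q^\epsilon)$ and similarly for $k^\epsilon$. Apply the transformation \eqref{nft3} (with $W,Q$ replaced by $W^\epsilon,Q^\epsilon$) to this identity. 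Since \eqref{nft1+a} inverts \eqref{nft3} to cubic order, the transformed left-hand side equals $\tW^\epsilon_t + \tQ^\epsilon_\alpha$ modulo quartic-and-higher terms, and the transformed $G^{\leq 3}$ contribution matches $\tG^{(3)}(\tW^\epsilon,\tQ^\epsilon)$ modulo quartic-and-higher terms — this is exactly the algebraic content of the normal form computation leading to \eqref{nft-ww2d1}–\eqref{tGK3}. Comparing with \eqref{nft2-eps}, the genuinely new error $g^\epsilon$ differs from $\tg^\epsilon$ only by (a) quartic-and-higher polynomial expressions in $(\tW^\epsilon,\tQ^\epsilon)$ coming from truncating the normal form at cubic order, and (b) terms where a $\partial_t$ falls on the cubic corrections in \eqref{nft1+a}, producing expressions with one factor of $\partial_t\tW^\epsilon$ or $\partial_t\tQ^\epsilon$. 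For (a), each quartic term carries at least $\|\tW^\epsilon\|_{L^2}\|\tW^\epsilon\|_{L^\infty}^3 \lesssim \epsilon^{1/2}\epsilon^3 = \epsilon^{7/2}$ after using frequency localization to absorb derivatives. For (b), I would substitute the NLS equation \eqref{eqn-ye} for $\partial_t\tY^\epsilon$ exactly as in the previous lemma's proof: $\partial_t\tY^\epsilon = i\omega_0(D)\tY^\epsilon + \frac{i}{2}\tY^\epsilon|\tY^\epsilon|^2 - ig^\epsilon$; the $\omega_0(D)$ piece is $O_{L^\infty}(\epsilon)$ by \eqref{Y-at-i}/\eqref{ye-bound}, the cubic piece is $O_{L^\infty}(\epsilon^3)$, and $g^\epsilon$ is harmless by \eqref{ge-bound}, so each such term again loses a factor comparable to $\epsilon^{7/2}$. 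Finally $\tg^\epsilon$ itself is $O_{L^2}(\epsilon^{7/2})$ by \eqref{tgk}, so all contributions combine to the claimed bound; the argument for $k^\epsilon$ is identical with the obvious substitutions.

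The main obstacle is the bookkeeping in (b): one must confirm that every term generated by differentiating the cubic corrections in \eqref{nft1+a} in time — after inserting \eqref{eqn-ye} — either matches a term already present in the normal-form derivation of $\tG^{(3)},\tK^{(3)}$ (and hence is accounted for in $(\tW^\epsilon,\tQ^\epsilon)$ solving \eqref{nft2-eps}) or is genuinely quartic-or-smaller in the frequency-localized sense and thus $O_{L^2}(\epsilon^{7/2})$. This is where the precise form of the normal form transformation \eqref{nft3}, and the fact that \eqref{nft1+a} was constructed to be its cubic inverse, must be used carefully rather than invoked abstractly; the derivative count ($\partial_\alpha$ and the half-derivatives from $|D|^{\pm 1/2}$) is always repaired by the $\xi\in(-6,0)$ localization, so no derivative ever costs a power of $\epsilon$. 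Everything else is the same routine estimation — $\|\cdot\|_{L^2}\lesssim\epsilon^{1/2}$ on one factor and $\|\cdot\|_{L^\infty}\lesssim\epsilon$ (via Bernstein and \eqref{Y-bd}) on the rest — already rehearsed in the proof of the preceding lemma.
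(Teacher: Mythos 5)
Your proposal is correct and follows essentially the same route as the paper: parts (i)--(ii) are the routine frequency-localization and $L^2$/$L^\infty$ bookkeeping, and part (iii) rests on the observation that \eqref{nft1+a} is the formal cubic inverse of \eqref{nft3}, so that all terms up to cubic order cancel and the residual errors are either quartic-and-higher in $(\tW^\epsilon,\tQ^\epsilon)$ (bounded via \eqref{Y-bd}) or linear in the normal-form errors $(\tg^\epsilon,\tk^\epsilon)$ (bounded via \eqref{tgk}). The only cosmetic difference is that you route the time-derivative bookkeeping through the NLS equation \eqref{eqn-ye} directly, whereas the paper uses the previous lemma's conclusion that $(\tW^\epsilon,\tQ^\epsilon)$ solves \eqref{nft2-eps} up to an $O_{L^2}(\epsilon^{7/2})$ error as a black box; these are equivalent.
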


\begin{proof}
Since the reverse normal form transformation \eqref{nft1+a} formally inverts 
the direct normal form transformation \eqref{nft3}, it follows that all the terms up to cubic order 
in $(\tW^\epsilon,\tQ^\epsilon)$ will cancel in \eqref{ww2d1-eps+}. Hence the errors
$(g^\epsilon,k^\epsilon)$ will contain two types of terms:

\bigskip

a) Quartic and higher order terms in $(\tW^\epsilon,\tQ^\epsilon)$. By
part (ii) of the previous Lemma, the functions
$(\tW^\epsilon,\tQ^\epsilon)$ will also satisfy \eqref{Y-bd}, which
allows us to estimate these terms.

b) Bilinear and higher order terms in $(\tW^\epsilon,\tQ^\epsilon)$ and
$(\tg^\epsilon,\tk^\epsilon)$ which are linear in the latter. There we combine 
\eqref{Y-bd} for $(\tW^\epsilon,\tQ^\epsilon)$ with \eqref{tgk}.
\end{proof}

To finish the proof of Theorem~\ref{t:approx} it remains to estimate the quartic and higher order terms
in the nonlinear terms $(G,K)$ in \eqref{ww2d1-re},
\[
\| (G^{(\geq 4)},K^{(\geq 4)}) (W^\epsilon,Q^\epsilon)\|_{H^N} \lesssim \epsilon^\frac72.
\]
Here we are also asking for higher Sobolev regularity since these
expressions are fully nonlinear, in that they may also contain factors
of $\dfrac{1}{1+W_\alpha}$ and its conjugate.  For the $L^2$ bound we
simply use the analogue of \eqref{Y-bd} for $(W^\epsilon,Q^\epsilon)$
and bound the nonlinear factors as above in $L^\infty$. For higher
regularity we differentiate, distribute derivatives and repeat the
same argument.

%We remark that we can also substitute $\tY^\epsilon$ in here 
%and discard null cubic terms and quartic and higher order terms
%to set instead 
%\begin{equation}\label{nft1+b}
%\begin{split}
%W = & \  \tY + 2 P[\Re \tY  \tY_\alpha]  -  2i H (|\tY|^2)  \tY  - 3 \tY |\tY|^2  
%- \frac32 \tY^3
%\\
% Q = & \ |D| \tY + 2 P[ \Re \tY |D|^\frac12 \tY_\alpha] -  2i H (|\tY|^2)  \tY  - 4 \tY |\tY|^2  
%- \frac32 \tY^3
%\end{split}
%\end{equation}

\section{ Energy estimates and the exact solution}
\label{s:exact}

To compare the approximate solution $(W^\epsilon,Q^\epsilon)$ given by Theorem~\ref{t:approx}
with the exact solution $(W,Q)$ with the same data at time $0$ we consider, within the time interval 
\[
I = [0,T/\epsilon^2],
\]
a one parameter family of solutions $(W(t,s),Q(t,s))$ to \eqref{ww2d1}. 
Here $t$ represents time and $s$ is the  parameter. These solutions are defined as the solutions to the 
Cauchy problem for the water wave equations \eqref{ww2d1} with initial data at time $t = s$,
\[
(W(s,s), Q(s,s)) = (W^\epsilon(s),Q^\epsilon(s)).
\]

A-priori these solutions exist on a time interval around $t = s$, but
not necessarily up to time $T \epsilon^{-2}$.  We claim that if
$\epsilon$ is small enough then indeed they exist up to this time and
further that their control parameters satisfy the uniform bounds
\[
A(t,s),B(t,s) \leq M\epsilon. 
\]
for a universal constant $M$.

To prove our claim we use a continuity argument, based on the fact
that $A$ and $B$ are continuous functions of $t,s$ wherever they are
defined.  Precisely, we make the bootstrap assumption
\[
A(t,s), B(t,s) \leq 2M\epsilon, \qquad s_0 \leq s \leq t \leq T \epsilon^{-2},
\]
and show that for $M$ large enough the constant $2M$  can be improved to $M$. This will show that 
the minimal $s_0$ for which the above property holds is indeed $s_0 = 0$.

We begin our analysis by estimating high frequencies, which is achieved by propagating regularity in $t$, 
applying the energy estimates in Theorem~\ref{t:energy}. These show that the 
solutions $(W(t,s),Q(t,s))$ satisfy the uniform bound
\begin{equation}\label{e1}
\| (\W,R)(t,s) \|_{H^N} \lesssim e^{4CM^2 T} \epsilon^\frac12.
\end{equation}

On the other hand, in order to estimate the low frequencies we
consider the $s$ derivatives of $W$ and $Q$, namely
$(w,q) = (\partial_s W(t,s),\partial_sQ(t,s))$. These solve the
linearized equation \eqref{ww2d-lin} in $t$, with initial data
\[
(w,q)(s,s) = (\tG(s),\tK(s)).
\]
We switch to the good variable $r = q- Rw$, and apply Theorem~\ref{t:lin}
to obtain the uniform bound
\begin{equation}\label{e2}
\|(w,r)(t,s) \|_{\H} \lesssim e^{4CM^2 T} \epsilon^{\frac{7}2}.
\end{equation}

Integrating in $s$, the $w$ bound yields
\begin{equation}\label{e3}
\| W(t,s) - W(t,t)\|_{L^2} \lesssim  T e^{4cM^2 T} \epsilon^{\frac{3}2}.
\end{equation}
Similarly, combining the $w$ and $r$ bounds via 
\[
Q_s = r + Rw,
\]
we get 
\begin{equation}\label{e3+}
\| Q(t,s) - Q(t,t)\|_{\dot H^\frac12+\epsilon L^2} \lesssim  T e^{4CM^2 T} \epsilon^{\frac{3}2}.
\end{equation}

Next we consider $R$, for which we have 
\[
R_s = \frac{Q_{\alpha s}- R W_{\alpha s}}{1+W_\alpha}  = \frac{ r_\alpha  + R_\alpha w}{1+W_\alpha}   
\]
We can bound this in $H^{-\frac12}$ as 
\[
\| R_s \|_{H^{-\frac12}} \lesssim  e^{8CM^2 T} \epsilon^{\frac{7}2}.
\]
Then we can integrate in $s$ to  estimate
 \begin{equation}\label{e4}
\| R(t,s) - R(t,t)\|_{H^{-\frac12}} \lesssim  T e^{8CM^2 T} \epsilon^{\frac{3}2}.
\end{equation}

Combining now the bounds \eqref{e1}, \eqref{e3}, \eqref{e4} and interpolating we obtain
\begin{equation}
\| (\W(t,s) - \W(t,t), R(t,s) - R(t,t))\|_{\H^N} \lesssim T e^{8CM^2 T} \epsilon^{\frac{3}2-\delta}.
\end{equation}
Then by Sobolev embeddings
\[
A(t,s) + B(t,s) \lesssim  e^{8cM^2 C} \epsilon^{\frac{3}2-\delta} + A(t,t) + B(t,t) \lesssim  
Te^{8CM^2 T} \epsilon^{\frac{3}2-\delta}  + \epsilon
\]
Here the implicit constant does not depend on $M$. On the other hand $M$ does appear in the 
first exponential, but there it is controlled by the extra power of $\epsilon$ provided 
that $\epsilon$ is small enough.
This concludes our bootstrap argument. 

This does not quite conclude the proof of Theorem~\ref{t:NLS-approx}, as so far, instead of 
\eqref{error-hi} we only get 
\begin{equation}\label{almost-hi}
\|(\W - i Y^\epsilon,R - i Y^\epsilon )\|_{\H^N} \lesssim T e^{8CM^2 T} \epsilon^{\frac{3}2-\delta}. 
\end{equation}
We will rectify this in the next section.

\section{Further energy estimates}

\subsection{ Proof of Theorem~\ref{t:NLS-approx}, completed}

Here we will prove a better high frequency bound for $(W,Q)$, namely 
\begin{equation}\label{hh}
\| (D+1)(W,Q)\|_{\H^N} \lesssim T e^{8cM^2 T} \epsilon^{\frac{3}2}. 
\end{equation} 
By \eqref{e3} and \eqref{e3+} we already have the low frequency part of this bound, 
so it only remains to estimate the high frequencies. Since the initial data is frequency localized,
we have this bound at the initial time so it remains to propagate it up to time $T_\epsilon$.

In view of \eqref{almost-hi},
our starting point is the bound
\begin{equation}\label{hh-}
\| (D+1)(W,Q)\|_{\H^N} \lesssim T e^{8CM^2 T} \epsilon^{\frac{3}2-\delta}. 
\end{equation}

To $(W,Q)$ we apply the normal form transformation \eqref{nft3} to obtain the associated 
normal form variables
$(\tW,\tQ)$. By \eqref{hh-}, it suffices to prove \eqref{hh} for $(\tW,\tQ)$. 
On the other hand, we also have \eqref{hh-} for $(\tW,\tQ)$. 

The normal form variables $(\tW,\tQ)$ solve the equation \eqref{nft-ww2d1}. We separate $(\tG,\tK)$ into cubic 
and quartic and higher terms. Using \eqref{hh-} and the $\epsilon$ uniform bound for $A$ and $B$,
 the quartic terms are estimated by 
\[
\| (\tG^{\geq 4},\tK^{\geq 4})\|_{\H^N} \lesssim T e^{8CM^2 T} \epsilon^{\frac{7}2}, 
\]
where the worst contribution is the low frequency part where we can only use the $\epsilon^\frac12$ 
bound in $L^2$. Thus we have 
\begin{equation}
\left\{
\begin{aligned}
&\tW_t + \tQ_\alpha = \tG^{(3)} + O_{H^N} (e^{8cM^2 T} \epsilon^{\frac{7}2}) 
\\
&\tQ_t - i \tW =  \tK^{(3)} + O_{H^N} (e^{8cM^2 T} \epsilon^{\frac{7}2}) 
\end{aligned}
\right.
\label{nft-ww2d-high}
\end{equation}
where $(\tG^{3},\tK^{3})$ are given by \eqref{tGK3}.

The key feature of the cubic nonlinearity is that all terms have exactly one complex conjugate, i.e. 
the nonlinearity has a phase shift invariance. Because of this, when we apply the operator 
$D+1$ to the above equation we can distribute it to each of the factors in $(\tG^{3},\tK^{3})$.
Denoting 
\[
(w,q) = (D+1)(\tW,\tQ)
\]
we can thus write an equation for $(w,q)$. Before writing it, we simplify it as follows:

\begin{itemize}
\item We replace $(D+1)(W,Q)$ by $(D+1)(\tW,\tQ)$ and thus by $(w,q)$ in $(\tG^{3},\tK^{3})$,
at the expense of quartic terms which can be placed into the error as above.

\item We localize the remaining $(W,Q)$ factors in $(\tG^{3},\tK^{3})$ to frequencies $\lesssim 1$,
as the terms with high frequencies can be placed into the error using \eqref{hh-}.

\item We discard all $(\bar w,\bar q)$ terms, for they either have high frequency and then they are
eliminated by the projection, or they have low frequency and then their contributions can be placed
into the error directly.

\item We further replace the low frequency factors by $\tY^\epsilon$, and similarly their derivatives 
by $-i Y^\epsilon$, etc, using \eqref{almost-hi}. Here the terms involving $\partial_\alpha |\tY^\epsilon|^2$
will be only partially discarded for symmetry purposes.
\end{itemize}

We recall that 
\[
\left\{
\begin{aligned}
\tG^{(3)}  =  &  \partial_\alpha \left[ W P[W_\alpha \bar Q_\alpha - 
Q_\alpha \bar W_\alpha]\right]
\\
\tK^{(3)} = & \   W P[[Q_\alpha|^2]_\alpha 
+ Q_\alpha P[ \bar Q_\alpha W_\alpha - \bar W_\alpha Q_\alpha] + \partial_{\alpha}P\left[ Q^2_{\alpha}\bar{W}\right].
\end{aligned}
\right.
\]
Then we are left with the following equation for $(w,q)$ :
\begin{equation}
\left\{
\begin{aligned}
w_t + q_\alpha =& \  i \partial_\alpha (\tY^\epsilon P[ \bar \tY^\epsilon w_\alpha])
  - i \partial_\alpha(\tY^\epsilon P[\bar \tY^\epsilon q_{\alpha}]) + O_{H^N} (T e^{8CM^2 T} \epsilon^{\frac{7}2}) 
\\
q_t - i w =  & \  \tY^\epsilon P[\bar \tY^\epsilon w_\alpha]
+ i \tY^\epsilon P[ \bar \tY^\epsilon q_\alpha]_{\alpha} \! -  \!  \tY^\epsilon P[ \bar \tY^\epsilon q_\alpha]
 \\ & \ -  i [  \partial_\alpha ( |\tY^\epsilon|^2 q_\alpha)
+  |\tY^\epsilon|^2 q_{\alpha\alpha}] \!   +   O_{H^N} (T e^{8CM^2 T} \epsilon^{\frac{7}2}) 
\end{aligned}
\right.
\label{nft1eq-diff}
\end{equation}
Here we can directly compute  energy estimates for
$(w,q)$ to get
\[
\frac{d}{dt} \| (w,q)\|_{\H}^2  \lesssim T e^{8CM^2 T} \epsilon^{\frac{7}2} \| (w,q)\|_{\H},
\]
and conclude that 
\[
 \| (w,q)\|_{\H} \lesssim T^2 e^{8CM^2 T} \epsilon^\frac32,
\]
as desired.  The same applies to higher order derivatives of $(w,q)$
because when we differentiate the last equation, all terms where the
derivative falls on $ \tY^\epsilon $ are matched by terms where the
derivative falls on $ \bar \tY^\epsilon $, canceling at the leading
order.  Thus we gain another $\epsilon$ factor and can be placed into
the error.

\subsection{ Proof of Theorem~\ref{t:NLS-approx+}}

We will prove the result in two steps.
\medskip

\textbf{STEP 1: Smooth data.}  First, assume that the initial data $(W^1_0,Q^1_0)$ is frequency 
localized at frequency $\lesssim 1$. We consider a one parameter family of data 
$(W^h_0,Q^h_0)_{h \in [0,1]}$
interpolating linearly between the data $(W_0,Q_0)$ given by Theorem~\ref{t:NLS-approx}.

A-priori these solutions are smooth, exist locally in time near $t=0$, and depend smoothly on $h$. 
We claim that they exist uniformly up to time $T_\epsilon$, and that their associated control 
norms satisfy the bound
\[
A(h,t) + B(h,t) \lesssim \epsilon.
\]
By a continuity argument, to prove this it suffices to prove that the above bound holds 
in an interval $[0,t_0]$  with $t_0 \leq T_\epsilon$ under a bootstrap assumption
\[
A(h,t) + B(h,t) \lesssim M \epsilon.
\]
On one hand, by Theorem~\ref{t:energy},  the bootstrap assumption will  insure the uniform bound
\begin{equation}\label{h1}
\| (\W^h,R^h)\|_{\H^N} \lesssim e^{CM^2 T} \epsilon^\frac12.
\end{equation}
On the other hand by Theorem~\ref{t:lin} we get
for the linearized variables
\[
(w,q) = \partial_h (W,Q), \qquad r = q - Rw
\]
the bound
\[
\|  (w,r)\|_{\H} \lesssim e^{4CM^2 T} \epsilon^{1+\delta}.
\]
 
Arguing as in the proof of \eqref{e3}, \eqref{e3+} and \eqref{e4}, by integrating in $h$  we obtain
\begin{equation}\label{h3}
\| W^1-W\|_{L^2} \lesssim  e^{4CM^2 T} \epsilon^{1+\delta},
\end{equation}
\begin{equation}\label{h3+}
\| Q^1 - Q\|_{\dot H^\frac12+\epsilon L^2} \lesssim  e^{4CM^2 T} \epsilon^{1+\delta},
\end{equation}
 \begin{equation}\label{h4}
\| R^1 - R\|_{H^{-\frac12}} \lesssim  e^{4CM^2 T}\epsilon^{1+\delta}.
\end{equation}
After interpolation with \eqref{h1} this leads to 
\begin{equation}\label{h1+}
\| (\W^1-\W,R^1-R)\|_{\H^N} \lesssim e^{4CM^2 T} \epsilon^{1+\delta_1}. \qquad 0 < \delta_1 < \delta.
\end{equation}
Combining the last bound \eqref{h1+}, and Sobolev embeddings we obtain
\[
A(t,s) + B(t,s) \lesssim  \epsilon + e^{4CM^2 T}  \epsilon^{1+\delta_1}  \lesssim   \epsilon
\]
for small enough $\epsilon$ (depending on $T$) and conclude the bootstrap.

Now the same argument as in the previous subsection yields the bound
\begin{equation}
\| (D+1)(W^1,Q^1)\|_{\H^N} \lesssim  e^{4CM^2 T} \epsilon^{1+\delta}.
\end{equation}

\textbf{STEP 2. Rough data:} Here we borrow an idea from \cite{HIT}, Section 4.5,  namely to construct 
the solution $(W^1,Q^1)$  by starting with a regular  solution obtained by some regularization of the data,
and then by adding frequency layers to the solution. 

As in \cite{HIT}, it is more convenient to perform the frequency
localization at the level of the differentiated equation. Precisely,
for $k \geq 1$ we consider the sequence of initial data $(\W^{1}_{0,
  \leq k},R^{1}_{0,\leq k})$ for the differentiated equation. These
also correspond to data $(W^{1}_{0, \leq k},Q^{1}_{0,\leq k})$ for
the undifferentiated equation, as explained in \cite{HIT}. Here $k$ is
viewed as a continuous parameter, in order for us to be able to use
bounds for the linearized equation.

 The smooth starting point for this construction is $(\W^{1}_{0, \leq 1},R^{1}_{0,\leq 1})$ and its 
corresponding undifferentiated data $(W^{1}_{0, \leq 1},Q^{1}_{0,\leq 1})$. We remark that 
while $W^{1}_{0, \leq 1} = P_{\leq 1} W^1_0$ is directly obtained by truncating $W^1_0$ in frequency,
this is no longer the case for $Q^{1}_{0,\leq 1}$, which is determined from the relation 
 \[
\partial_\alpha Q^{1}_{0,\leq 1} = R^1_{0,\leq 1} ( 1+ \W^{1}_{0, \leq 1}).
\]
This is still localized in frequency, but we need to be careful when inverting the derivative. 
This is not a problem because on the right we are multiplying holomorphic functions,
which moves frequencies away from zero.

For this starting point we need to verify that we can apply the bounds in STEP 1, i.e. that 
the bounds \eqref{error-lo} and \eqref{error-hi} hold. This is easy for all differences except for $Q^{1}_{0,\leq 1}$,
where we compute
\[
Q^{1}_{0,\leq 1} - \tY_0^\epsilon = P_{\leq 1} (Q^{1}_{0} - \tY^\epsilon_0) -  \partial_\alpha^{-1}
\left( P_{\leq 1} (R_0^1 \W_0^1) -  R^1_{0,\leq 1}  \W^{1}_{0, \leq 1}\right)
\]
In the last term after cancellations we are left only with output at frequency $1$ so the inverse derivative is harmless, 
and we can estimate it in $L^2$ by $\epsilon^{2+\delta}$ which is much better than needed.

 Then we consider the corresponding solutions  $(W^{1}_{0, \leq k},Q^{1}_{0,\leq k})$, which a-priori exist 
uniformly on a small time interval. To prove that they extend up to time $T_\epsilon$ we make the bootstrap assumption
\[
A(t,k) + B(t,k) \leq M \epsilon,
\]
and then prove a stronger bound
\begin{equation}\label{need}
A(t,k) + B(t,k) \lesssim \epsilon.
\end{equation}

Suppose that $\epsilon^{1+\delta} c_k$ is a frequency envelope for
$(\W,R)$ in $\H^1$. Then from Theorem~\ref{t:energy} we have the uniform energy bounds
\[
\| (\W^{1}_{\leq k},R^{1}_{\leq k})\|_{\H^N} \lesssim e^{4CM^2 T} 2^{(N-1)k} c_k \epsilon^{1+\delta} + \epsilon^\frac12, \qquad N \geq 2.
\]
On the other hand  from Theorem~\ref{t:lin} we have the difference bounds
\[
\| (\W^{1}_{0, \leq k+1}- \W^{1}_{0, \leq k},R^{1}_{0,\leq k+1} - R^{1}_{0,\leq k})\|_{\H^{-1}} \lesssim e^{4CM^2 T} 2^{-2k} c_k\epsilon^{1+\delta}, 
\]
see again \cite{HIT}, Section 4.5.

Interpolating we obtain
\[
\| (\W^{1}_{0, \leq k+1}- \W^{1}_{0, \leq k},R^{1}_{0,\leq k+1} - R^{1}_{0,\leq k})\|_{\H} \lesssim e^{4CM^2 T} 2^{-k} c_k\epsilon^{1+\delta_1}, 
\]
and
\[
\| (\W^{1}_{0, \leq k+1}- \W^{1}_{0, \leq k},R^{1}_{0,\leq k+1} - R^{1}_{0,\leq k})\|_{\H^2} \lesssim e^{4CM^2 T} 2^{k} c_k\epsilon^{1+\delta_1}. 
\]
This shows that the differences $(\W^{1}_{0, \leq k+1}- \W^{1}_{0, \leq k},R^{1}_{0,\leq k+1} - R^{1}_{0,\leq k})$
have $\H^1$ size $c_k$ and are localized at frequency $2^k$. The $\H^1$ bounds for both the truncated and 
the full solutions  $(\H^1, R^1)$ for the differentiated equation then follow after summation of the 
differences with respect to $k$,
\[
\| (\W^{1}_{\leq k}- \W^{1}_{\leq k},R^{1}_{\leq k} - R^{1}_{\leq k})\|_{\H^1} \lesssim e^{4CM^2 T}  \epsilon^{1+\delta_1} 
\]
This in turn yields the desired bound \eqref{need} and closes the bootstrap argument.

\bibliographystyle{plain}
\bibliography{refs}

\end{document}